\newtheorem{theorem}{Theorem}
\newtheorem{lem}{Lemma}
\newtheorem{defn}{Definition}
\newtheorem{prop}{Proposition}
\newenvironment{proof}{\quad{\it Proof:\ }}{ \hfill \IEEEQED\par}
\newenvironment{proof-of}[1]{\quad{\it Proof of #1:\ }}{ \hfill \IEEEQED\par}
\renewcommand{\Pr}{{\mathbb{P}}}
\newcommand\E{\ensuremath{\mathbb E}}
\newcommand\R{\ensuremath{\mathbb R}}
\newcommand{\cG}{{\mathcal{G}}}
\newcommand{\Hinf}{{\mathbb{H}_{\infty}}}
\newcommand{\Htwo}{{\mathbb{H}_{2}}}
\newcommand{\cC}{{\mathcal{C}}}
\newcommand{\cS}{{\mathcal{S}}}
\newcommand{\cD}{{\mathcal{D}}}
\newcommand{\cK}{{\mathcal{K}}}
\newcommand{\cDDp}{{\mathcal{DD}_+}}
\newcommand{\cDD}{{\mathcal{DD}}}
\newcommand{\cN}{{\mathcal{N}}}
\newcommand{\cO}{{\mathcal{O}}}
\newcommand{\diag}[1]{\textrm{diag}\left\{#1 \right\}}
\newcommand{\cM}{{\mathcal M}}
\newcommand{\tr}{\text{trace}}
\newcommand{\mH}{\mathcal H}
\newcommand{\mHp}{\mathcal H_{+}}
\newcommand{\cov}{{\text{cov}}}
\title{Structured Projection-Based Model Reduction with Application to Stochastic Biochemical Networks} 
\author{Aivar Sootla and James Anderson
\thanks{A. Sootla is with the Montefiore Institute, University of Li\`ege, B28, Li\`ege Belgium, B4000 e-mail: \texttt{asootla@ulg.ac.be}}
\thanks{J. Anderson is with St John's College, Oxford and the Department of Engineering Science, University of Oxford, Parks Road, Oxford, OX1 3PJ, U.K. e-mail: \texttt{james.anderson@eng.ox.ac.uk}}
\thanks{The authors would like thank Professors Bayu Jayawardhana and Shodhan Rao for kindly providing the kinetic model of yeast glycolysis. Dr Anderson acknowledges funding through a junior research fellowship from St. John's College, Oxford. Part of this work was performed when Dr Sootla was a postdoctoral researcher at Imperial College London. Therefore, Dr Sootla gratefully acknowledges support by the EPSRC Grants EP/J014214/1, EP/G036004/1 and is currently funded by the F.R.S-FNRS fellowship.}
}
\begin{document}

\maketitle
\IEEEpeerreviewmaketitle
\begin{abstract}
The Chemical Master Equation (CME) is well known to provide the highest resolution models of a biochemical reaction network. Unfortunately, even simulating the CME can be a challenging task. For this reason more simple approximations to the CME have been proposed. In this work we focus on one such model, the Linear Noise Approximation. Specifically, we consider implications of a recently proposed LNA time-scale separation method. We show that the reduced order LNA converges to the full order model in the mean square sense. Using this as motivation we derive a network structure preserving reduction algorithm based on structured projections. We present convex optimisation algorithms that describe how such projections can be computed and we discuss when structured solutions exits. We also show that for a certain class of systems, structured projections can be found using basic linear algebra and no optimisation is necessary. The algorithms are then applied to a linearised stochastic LNA model of the yeast glycolysis pathway.
\end{abstract}

\begin{IEEEkeywords}
Model Reduction, Structured Model Reduction, Linear Noise Approximation, Chemical Master Equation, Stochastic Differential Equations. 
\end{IEEEkeywords}

\section{Introduction - Models}
Stochasticity is inherent in biochemical networks. The most general (and unfortunately, most complex) model that best encapsulates the behaviour of such a network is the \emph{Chemical Master Equation} (CME). The CME is a continuous time, infinite dimensional Markov Chain that describes the evolution of a probability density function of the concentrations of the species of all reactants in a given biochemical reaction.

The CME models a reaction network comprising of $R$ reactions and $N$ species evolving in a compartment of fixed volume $\Omega$ and takes the form:
\begin{equation}\label{eq:CME}
\frac{\partial \Pr(n,t)}{\partial t }=\Omega \sum_{i=1}^R \hat f(n-S_i,\Omega)-\hat f(n,\Omega))\Pr(n,t)
\end{equation}
where the vector $n = [n_i ,\hdots, n_N]^\ast$ indicates the total number of molecules of each species in the volume $\Omega$, $[\cdot]^\ast$ denotes transposition, $\hat f$ is the flux-vector and $S \in \R^{N \times R}$ is the stoichiometry matrix (the $i^{\text{th}}$ column of $S$ is denoted by $S_i$). Finally, $\Pr(n,t)$ is the probability that at time $t$ the number of molecules of each species is given by $n$, i.e. $\Pr(n,t)$ is the probability density at time $t$.

It is fairly clear from the form of \eqref{eq:CME} that the complexity of even simulating the CME quickly becomes intractable for all but the most simple of networks. A great deal of research has focussed on efficient methods for simulating the CME with the most popular being Gillespie's famous \emph{stochastic simulation algorithm}~\cite{gillespie1977exact} and the computationally more efficient version known as the \emph{$\tau$-leaping} algorithm~\cite{Gil01}. The reader is directed to \cite{Gil01} for a detailed description of both algorithms and their derivations. Related work that aims to approximate the CME that does not rely on time-scale arguments is described in \cite{MunK06}. In one of the extensions of $\tau$-leaping~\cite{cao2008slow}, it is proposed to replace propensities, which correspond to fast, in some sense, reactions, by their averages and simulate only ``slow'' reactions. The idea of averaging (or integrating) out a part of a stochastic process as a model reduction tool dates back to Khasminskii~\cite{khas1968aver_c} and can be traced to Krylov-Bogolyubov averaging methods~\cite{KrylovBogolyubov1937}.

Recent developments in systems and synthetic biology revived the interest in time-scale separation of biochemical networks. In~\cite{thomas2012rigorous}, the authors derived a time-scale separation method for the so-called Linear Noise Approximation (LNA) of a CME. The LNA is a Gaussian process, which approximates a CME, under the assumption of a large number of reactions occurring in a large volume. We will revisit this method in what follows, and argue that this is an averaging of a linear, time varying, stochastic differential equation (SDE). In~\cite{herathmodel}, the authors applied the classical Tikhonov theorem (cf.~\cite{kokotovic1999singular})  
to the moments of the Chemical Langevin Equation (a nonlinear SDE approximation of the CME). It can be argued that the averaging is implicitly applied, while computing these moments. Finally, stochastic averaging for CMEs has been recently proposed in~\cite{kang2013separation}. These results are based on averaging schemes for semimartingales~\cite{kurtz1992averaging}.

In the control literature, time-scale separation methods were phased out by the use of the so-called projection-based methods (cf.~\cite{AntoulasBook}). Projection methods were specifically derived for input-output systems and typically provide better (in terms of the $L_2$ gain) approximations. The main idea of these methods is to project the state space onto a lower dimensional space in such a way that the input-output behaviour of the approximated system is similar to the original one. In the context of linear stochastic differential equations (SDEs) we can use the intrinsic noise (the Brownian motion driving the process) as an input~\cite{hartmann2011balanced}. This idea also appears in the so-called low-noise limit results in stochastic calculus (cf.~\cite{freidlin2012random}). 

A caveat in using balancing, is that projections typically destroy any physical interpretation of the state space, which is not desirable in many applications, especially when analysing networked systems. One cure for this problem relies on graph partitioning and clustering algorithms~\cite{AndP12,MonTC14}, which, unfortunately, do not provide error bounds if the nodes have dynamics beyond simple integrators. An alternative approach was proposed in~\cite{Sandberg09}, where only a part of the state-space can be projected to a lower dimensional space. Even though it is still an open question as to when structured projections exist and can be computed, in some cases existence and polynomial-time computation can be guaranteed. For instance, in the case of positive directed networks, model reduction can be performed with trivial projections~\cite{Sootla2012positive}. A more sophisticated projection approach was recently proposed in~\cite{ishizaki2015clustered}. 

\emph{Contributions.} In previous work~\cite{sootla2014projectionI,sootla2014projectionII}, we derived the main idea of the algorithm and showed that it can be applied to the so-called monotone systems. In this paper, we extend these results in several directions. We show 
that the averaged LNA converges to the full order LNA in the mean square sense. This extends the original method~\cite{thomas2012rigorous}, where the convergence in distribution is argued. We provide a novel proof for error bounds for structured balanced truncation~\cite{Sandberg09}. From our point of view our proof is simpler and offers a different insight into the problem. Using this new technique we derive error bounds for structured balanced singular perturbation, which are used to justify our approach.
We continue by identifying a broad class of systems, to which structured model reduction can be applied, by using ideas from matrix theory. The property allowing this application is related to diagonal dominance of the drift matrix (see~\cite{willems1976lyapunov} for control theoretic implications). 
Finally, we present a projection-based model reduction algorithm for the LNA, which can be applied to a broad class of biochemical networks.

\textbf{\emph{Organisation.}} The paper is organised as follows. In Section~\ref{s:prel}, we briefly introduce averaging techniques for nonlinear deterministic and stochastic dynamical systems. In Section~\ref{s:aver-lna}, we present the averaging result for the LNA and provide the convergence proof. We then discuss structured model reduction of linear systems in Section~\ref{s:mor}, while providing sufficient conditions for computing the reduced order models and computing the error bounds for the structured model reduction. In Section~\ref{s:proj-lna} we apply structured model reduction techniques to the LNA and illustrate the application on examples in Section~\ref{s:ex}. Technical lemmas are found in Appendix. 

\textbf{\emph{Notation}}. $A^\ast$ denotes the complex conjugate transpose of the matrix $A$. The norm $\|\cdot\|_2$ is the standard induced matrix norm. $A \ge 0$ ($A\gg 0$) denotes that $a_{i j}\ge 0$  (resp., $a_{i j}> 0$) for all $i$, $j$. When $A$ is square symmetric, $A \succeq 0$ denotes that $A$ is positive semidefinite. $\E(\xi)$ and $\cov(\xi)$ stand for the mean and the covariance of the random variable $\xi$, respectively, while $\xi \in\cN(\mu, \Sigma)$ indicates that $\xi$ is drawn from a Gaussian distribution with mean $\mu$ and  covariance matrix $\Sigma$. 

\section{Preliminaries \label{s:prel}}
\subsection{Averaging \label{ss:ave}}
We first present some background material on averaging beginning with the deterministic case. Given a continuous, bounded function $g(t,x)$ where $g:[0,\infty)\times \cD \rightarrow \R^n $, $g$ is said to have an (ergodic) average denoted by $\widetilde{g}$ if
\begin{equation*}
\widetilde{g}(x) := \lim_{T\rightarrow \infty} \frac{1}{T}\int_{t}^{t+T}g(\tau,x)d\tau < \infty
\end{equation*}
and 
\begin{align*}
\left\| \frac{1}{T}\int_{t}^{t+T}g(\tau,x)d\tau -\widetilde{g}(x) \right\| & \le k \chi(T)
\end{align*}
for all $(t,x)$ in $[0,\infty) \times \cD_*$ and every compact set $\cD_* \subset \cD$, where $k$ is a positive constant and $\chi$ is a strictly decreasing, continuous, bounded function such that 
$\lim_{T\rightarrow \infty}\chi(T)=0$. We will refer to $\chi$ as the convergence function of $g$.
The most basic averaging problem formulation is as follows. Consider the deterministic autonomous system
\begin{equation}\label{eq:ave}
\dot{x} = \epsilon f(t,x,\epsilon)
\end{equation}
defined on a domain $\cD$ and with $0< \epsilon \ll 1$. Assume that the first and second partial derivatives of $f$ with respect to $x$ and $\epsilon$ are continuous and bounded on $[0,\infty)\times \cD_* \times [0,\epsilon]$ for every compact set $\cD_* \subset \cD$. Suppose that $f(t,x,0)$ has an average function $\widetilde{f}$ on $[0,\infty)\times \cD$, then 
\begin{equation}\label{eq:ave_sys}
\dot{x} = \epsilon \widetilde{f}(x)
\end{equation}
is said to be the average system induced by \eqref{eq:ave}. Let $x(t,\epsilon)$ and $\widetilde{x}(t\epsilon)$ denote solutions to \eqref{eq:ave} and \eqref{eq:ave_sys} respectively. Averaging methods are then used to make conclusions about $x(t,\epsilon)$ based on solutions to the averaged (and easier to analyse) system \eqref{eq:ave_sys} using the fact that 
\begin{equation}\label{eq:order}
x(t,\epsilon)-\widetilde{x}(t\epsilon)= \cO(\delta(\epsilon)) \text{ on } [0,\alpha], 
\end{equation}
if the equilibrium point at the origin of the averaged system is exponentially stable. The time interval defined by the positive constant $\alpha$ varies depending on various stability assumptions, the function $\delta$ is of class $\cK$ and depends on the averaging function.\footnote{A function $\delta:[0,a]\rightarrow [0,\infty)$ with $a>0$ is a class $\cK$ function if $\delta$ is strictly increasing and $\delta(0)=0$. } In order for the order bound \eqref{eq:order} to hold it is assumed that the Jacobian of
\begin{equation*}
g(t,x):= f(t,x,0)-\widetilde{f}(x) 
\end{equation*}
has zero average and the same convergence function as $f$.

Averaging methods in their standard form provide a method for reasoning about systems of the form \eqref{eq:ave} by analysing their averaged counterpart \eqref{eq:ave_sys} which is often of a simpler form than the original system. However, implicit in this is the assumption that the state equation depend \emph{smoothly} on $\epsilon$. When such a smoothness assumption fails, provided the system can be written in the form
\begin{subequations}
\begin{align}
\dot x_1&=f_1(t,x_1,x_2,\epsilon), \label{eq:slow1} \\
\dot x_2 &= \frac{1}{\epsilon}f_2(t,x_1,x_2,\epsilon), \label{eq:fast1}
\end{align}
\end{subequations}
then averaging can be used to integrate out the variable $x_2$ from the dynamics and produce bounds of the form of \eqref{eq:order}. In essence good approximations of the slow dynamics \eqref{eq:slow1} can be made by averaging the fast dynamics \eqref{eq:fast1}.

\subsection{Stochastic Averaging for Time-Scale Separation \label{ss:stoch-aver}}
The stochastic process $X(t)$ is said to be second-order if $\E X^2(t) < \infty$ for all $t$.  We will assume throughout that all stochastic processes satisfy the second-order condition. A Weiner process $w(t)$ is a stochastic process that satisfies the following conditions: (1) $w(0)=0$, (2) $w(t)-w(s)\sim \cN(0,(t-s) I)$ for any $0 \le s < t$ and increments of non-overlapping time intervals are independent, (3) $\E w(t)=0$ for all $t>0$. We consider stochastic differential equations (SDEs) in the following form:
\begin{equation}\label{eq:SDE}
\dot X = b(t,X) + \sigma(t,X) \dot w
\end{equation}
with $X(t)\in \R^n$, and $w(t)$ an $n$-dimensional Weiner process. From a stochastic calculus point of view the derivative of a Weiner process is a notation for the Brownian motion and hence our SDEs are in the sense of It\^o. We also make a standing assumption that $b(t, X)$ and $\sigma(t,X)$ are measurable, bounded and globally Lipschitz in $X$ uniformly over $t$, which implies that the solutions to~\eqref{eq:SDE} exist and are unique (cf.~\cite{Reiss2003}). 

Let us now consider the averaging principle for processes defined by stochastic differential systems. Following the presentation of \cite{freidlin2012random}, consider the system
\begin{subequations}
\begin{align}
\dot{X} &= b(X,Y)+\sigma(X,Y)\dot{w}, \quad X(0)=x \label{eq:slows1}\\
\dot{Y}& = \epsilon^{-1}B(X,Y)+\epsilon^{-1/2}C(X,Y)\dot{w}, \quad Y(0)=y \label{eq:slows2}
\end{align}
\end{subequations}
where $w$ is an $s$-dimensional Weiner process, $X \in \R^{n_1}$, $Y \in \R^{n_2}$, $\sigma(x,y)$ and $C(x,y)$ are matrices that map $\R^s$ to $\R^{n_1}$ and $\R^{n_2}$ respectively. To simplify things we assume that $\sigma$ only depends on $x$, i.e. $\sigma(x,y)=\sigma(x)$. 

Assume there exists a function $\widetilde{b}(x)$, such that for any $t\ge0$, $x\in \R^{n_1}$ and $y\in \R^{n_2}$ we have
\begin{equation}\label{eq:ave_stochastic}
\E \left\|\frac{1}{T}\int_t^{t+T}b(x,Y^{xy})ds-\widetilde{b}(x) \right\| < \chi(T)
\end{equation}
where $\chi$ is a convergence function and $Y^{xy}$ is a stochastic process defined  by
\begin{equation*}
\dot{Y}^{xy}(t) = B(x,Y^{xy}(t))+C(x,Y^{xy}(t))\dot{w}.
\end{equation*}
The following result summarises the classical stochastic averaging framework.

\begin{prop}[\cite{freidlin2012random}] 
Let $X(t)$ denote the random process that satisfies~\eqref{eq:slows1},~\eqref{eq:slows2} and assume that~\eqref{eq:ave_stochastic} holds. Define $\widetilde{X}(t)$ to be the process determined by
\begin{equation*}
\dot{\widetilde{X}}= \widetilde{b}(\widetilde{X}(t)) + \sigma(\widetilde{X}(t))\dot{w}(t), \quad \widetilde{X}(0)=x.
\end{equation*}
Then for any $T>0$, $x\in \R^{n_1}$ and $y\in \R^{n_2}$, we have
\begin{equation*}
\lim_{\epsilon \rightarrow 0}P \left\{ \sup_{0\le t \le T}\| X(t)-\widetilde{X}(t) \|>\delta \right\}=0.
\end{equation*}
\end{prop}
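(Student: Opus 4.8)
The plan is to follow the classical block-averaging (or ``freezing'') argument of Freidlin--Wentzell, exploiting the two-time-scale structure together with the ergodic estimate~\eqref{eq:ave_stochastic}. Since $X$ and $\widetilde{X}$ are driven by the \emph{same} Weiner process $w$ and $\sigma$ depends only on the slow variable, writing $Z(t) = X(t) - \widetilde{X}(t)$ in integral form and adding and subtracting $\widetilde b(X(s))$ gives
\begin{align*}
Z(t) &= \int_0^t \big[b(X(s),Y(s)) - \widetilde{b}(X(s))\big]\,ds \\
&\quad + \int_0^t \big[\widetilde{b}(X(s)) - \widetilde{b}(\widetilde{X}(s))\big]\,ds \\
&\quad + \int_0^t \big[\sigma(X(s)) - \sigma(\widetilde{X}(s))\big]\,dw(s).
\end{align*}
The second and third terms are benign: the standing global Lipschitz hypotheses on $\widetilde{b}$ and $\sigma$, combined with the Doob (or Burkholder--Davis--Gundy) inequality for the martingale term and Gronwall's lemma, bound $\E\sup_{s\le t}\|Z(s)\|^2$ by a constant multiple of $\E\sup_{s\le t}\|A(s)\|^2$, where $A(t) := \int_0^t \big(b(X(s),Y(s)) - \widetilde{b}(X(s))\big)\,ds$ is the genuine averaging error. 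Convergence in probability of $\sup_{0\le t\le T}\|Z(t)\|$ then follows from Markov's inequality once I show $\E\sup_{t\le T}\|A(t)\|^2 \to 0$ as $\epsilon \to 0$.

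The core of the argument is the estimate on $A(t)$, carried out by partitioning $[0,T]$ into $M \sim T/\Delta$ blocks $[t_k, t_{k+1})$ of length $\Delta = \Delta(\epsilon)$, chosen so that $\Delta \to 0$ while $\Delta/\epsilon \to \infty$ (for instance $\Delta = \sqrt{\epsilon}$). On each block I freeze the slow variable at $x_k := X(t_k)$ and introduce the auxiliary fast process $\hat{Y}_k$ solving $\dot{\hat Y}_k = \epsilon^{-1} B(x_k, \hat{Y}_k) + \epsilon^{-1/2} C(x_k, \hat{Y}_k)\dot{w}$ with $\hat{Y}_k(t_k) = Y(t_k)$. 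Rescaling to the fast time $\tau = (s - t_k)/\epsilon$ turns $\hat Y_k$ into exactly the frozen process $Y^{x_k y}$ run over a horizon $\Delta/\epsilon$, so that $\int_{t_k}^{t_{k+1}} b(x_k, \hat Y_k(s))\,ds$ is precisely of the form to which~\eqref{eq:ave_stochastic} applies; hence in expectation it lies within $\Delta\,\chi(\Delta/\epsilon)$ of $\Delta\,\widetilde{b}(x_k)$. Summing over the $M$ blocks, the pure averaging error contributes at most $T\,\chi(\Delta/\epsilon)$, which vanishes because $\Delta/\epsilon\to\infty$.

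It then remains to control the two approximation errors incurred in replacing the true integrand $b(X(s),Y(s))-\widetilde b(X(s))$ by its frozen surrogate $b(x_k,\hat Y_k(s))-\widetilde b(x_k)$: (i) the \emph{slow-drift} error coming from $\|X(s)-x_k\|$ and $\|\widetilde b(X(s))-\widetilde b(x_k)\|$, and (ii) the \emph{fast-decoupling} error $b(x_k,Y(s)) - b(x_k,\hat Y_k(s))$. For (i), a standard moment bound on the slow SDE gives $\E\sup_{s\in[t_k,t_{k+1}]}\|X(s)-x_k\|^2 = \cO(\Delta)$ (drift $\cO(\Delta^2)$, martingale part $\cO(\Delta)$), so after multiplying by the Lipschitz constants and accumulating over the $M\sim\Delta^{-1}$ blocks this error is $\cO(\sqrt{\Delta})\to 0$. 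The main obstacle is (ii): although $Y(t_k)=\hat Y_k(t_k)$, the two trajectories use $X(s)$ and $x_k$ respectively and each runs for a long fast-time $\Delta/\epsilon$, so a naive pathwise Gronwall comparison produces a constant growing like $e^{C\Delta/\epsilon}$ that destroys the bound. I would therefore avoid such a comparison and instead either (a) assume, as is standard in this setting (and implicit in the validity of~\eqref{eq:ave_stochastic}), that the frozen fast process is uniformly ergodic/mixing with a contraction rate dominating the $1/\epsilon$ scaling, so that dependence on the frozen slow value relaxes geometrically; or (b) localise to a high-probability event on which $X$ and $Y$ remain in a compact set $\cD_*$, where the coefficients, their Lipschitz constants, and the $\cD_*$-uniform convergence in~\eqref{eq:ave_stochastic} are all controlled uniformly. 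Splitting the probability over this good event and its vanishing complement and assembling the block estimates yields $\sup_{t\le T}\|A(t)\|\to 0$ in probability, which closes the argument via the Gronwall step of the first paragraph.
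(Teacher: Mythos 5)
The paper does not actually prove this proposition: it is imported verbatim from Freidlin and Wentzell, so there is no internal argument to compare yours against. Judged on its own terms, your sketch follows the classical Khasminskii block-averaging proof that underlies the cited result: reduce matters to the averaging functional $A(t)$ via Lipschitz/BDG/Gronwall estimates (legitimate here because $\sigma$ depends only on the slow variable and both processes are driven by the same Wiener process), partition $[0,T]$ into blocks of length $\Delta$ with $\Delta\to 0$ and $\Delta/\epsilon\to\infty$, freeze the slow variable, and invoke~\eqref{eq:ave_stochastic} blockwise. Two small things you gloss over: \eqref{eq:ave_stochastic} is only an $L^1$ bound, so you need boundedness of $b$ to upgrade the blockwise estimate to the $L^2$ and $\sup_t$ control you claim; and applying it with $x=X(t_k)$, $y=Y(t_k)$ requires conditioning on the filtration at $t_k$ and the Markov property, with the uniformity of~\eqref{eq:ave_stochastic} in $(t,x,y)$ doing the work. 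All of this is the right skeleton.

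The one genuine gap is your step (ii), and neither of your proposed escapes closes it as stated. Assuming uniform mixing of the frozen fast process (your option (a)) is an additional hypothesis not present in the proposition, which assumes only~\eqref{eq:ave_stochastic}; and localisation to a compact set (your option (b)) does nothing about the offending factor $e^{C\Delta/\epsilon}$, since the Gronwall constant is already finite under the standing global Lipschitz assumption --- the problem is the length of the fast horizon, not unboundedness of the coefficients. The classical resolution is to exploit the freedom in $\Delta$: the pathwise Gronwall comparison bounds the decoupling error by a quantity of order $e^{C\Delta/\epsilon}$ times $\sup_{s\in[t_k,t_{k+1}]}\E\|X(s)-X(t_k)\|^2 = \cO(\Delta)$, so choosing $\Delta=\epsilon\,\kappa(\epsilon)$ with $\kappa(\epsilon)\to\infty$ slowly enough (e.g.\ $\kappa(\epsilon)=(2C)^{-1}\ln(1/\epsilon)$) makes $\Delta\,e^{C\Delta/\epsilon}\to 0$ while still sending $\chi(\Delta/\epsilon)=\chi(\kappa(\epsilon))\to 0$. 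Your specific choice $\Delta=\sqrt{\epsilon}$ is incompatible with the very Gronwall bound you compute and should be abandoned. With that modification the block estimates assemble as you describe and the argument recovers the Freidlin--Wentzell statement.
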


If $\sigma$ depends on both $X$ and $Y$, then a similar averaging procedure can be derived. However, the convergence result is weaker, namely, the process $X(t)$ converges weakly (in distribution) to the process $\widetilde{X}(t)$ with $\varepsilon\rightarrow 0$~\cite{khas1968aver_c}.

Note that when  $\sigma$ and $C$ equal zero then we have an ordinary differential equation and can obtain a generalisation of the celebrated Tikhonov theorem (cf.~\cite{kokotovic1999singular}). The main difference between the two approaches is that in Tikhonov theorem one sets $\varepsilon$ to zero in~\eqref{eq:slows2} and solves $B(X, Y) = 0$  for $Y$. In the stochastic case, this procedure cannot be applied, since $\dot w$ has infinite variation. Hence integrating variables out is essential for time-scale separation of stochastic processes. 
Recent work towards removing some of the strict assumptions in stochastic averaging, such as global Lipschitzness, equilibrium preservation, and exponential stability has been reported in~\cite{LiuK10,LiuK10a}.

\section{Averaging of the Linear Noise Approximation\label{s:aver-lna}}
In this paper, we consider an approximation of the CME; the \emph{Linear Noise Approximation} or LNA (cf.~\cite{thomas2012rigorous}). The LNA constitutes a valid approximation of the CME if a large number of reactions occur per unit time and additionally the volume $\Omega$ is sufficiently large. In this case let
\[
\frac{n}{\Omega} = x + \Omega^{-1/2} \eta,
\]
where $x$ is the vector of macroscopic concentrations of the species and $\eta$ is a vector of stochastic fluctuations about $x$. Now by applying the Taylor expansion to the CME, it can be shown (cf.~\cite{thomas2012rigorous}), that the fluctuations $\eta$ and macroscopic concentrations $x$ obey the following equations: 
\begin{subequations}\label{eq:LNA}
\begin{align}
\label{eq:det-dyn}   &\dot x = g(x), \\
\label{eq:lna-fla}   &\dot \eta = A(x) \eta + B(x) \dot w, 
\end{align}
\end{subequations}
where $g(x) = S f(x)$, $A(x)$ is the Jacobian of $S f(x)$, $B(x) = \Omega^{-1/2} S \diag{\sqrt{f(x)}}$, $S$ is the stoichiometric matrix from \eqref{eq:CME}, $f(x)$ is an approximation of $\hat f(n,\Omega)$ and $w$ is a $R$-dimensional Weiner process. Note that the \emph{macroscopic} fluctuation $f$ approximating the microscopic rate functions $\hat{f}$ for the four fundamental reactions as well as some more complex reactions are given in \cite{ThoSG12}. In fact it is shown that $\lim_{\Omega \rightarrow \infty}\hat{f}(n,\Omega)=f(n)$. 

To streamline presentation we will drop the dependence on $x$ from the notation when referring to $A(x)$ and $B(x)$. An important observation is that the the matrices $A$, $B$ do not depend on the fluctuations $\eta$, but depend only on the macroscopic concentrations $x$, which is computed using deterministic differential equations. Therefore the fluctuation dynamics constitute a linear time-varying SDE and the mean and the covariance of $\eta$ can be computed as follows (cf.~\cite{freidlin2012random}).

\begin{prop}\label{prop:cov}
The covariance $\cov(\eta(t)) = P(t)$ and the mean $\E(\eta(t)) =m(t)$ of the solution to the SDE~\eqref{eq:lna-fla} satisfy
\begin{align}
\label{eq:mean-lna}\frac{dm}{dt} & =A m,\quad m(t_0) = m_0,\\ 
\label{eq:cov-lna} \frac{dP}{dt} &= A P+P A^\ast + B B^\ast, \quad P(t_0)=P_0,
\end{align}
where $X(t_0)\sim \cN(m_0, P_0)$.
\end{prop}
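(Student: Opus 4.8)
The plan is to treat \eqref{eq:lna-fla} as a linear It\^o SDE with deterministic, time-varying coefficients $A(x(t))$ and $B(x(t))$. This is legitimate precisely because, as observed above, $A$ and $B$ depend only on the macroscopic concentration $x$ (itself governed by the deterministic ODE~\eqref{eq:det-dyn}) and not on the fluctuation $\eta$. Consequently the two moment equations decouple and follow from two standard manipulations: taking expectations of the SDE for the mean, and applying It\^o's formula to a quadratic functional of $\eta$ for the covariance.

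First I would establish the mean equation. Writing \eqref{eq:lna-fla} in integral form, $\eta(t) = \eta(t_0) + \int_{t_0}^t A\eta\,ds + \int_{t_0}^t B\,dw$, and taking expectations, the It\^o integral $\int_{t_0}^t B\,dw$ is a zero-mean martingale under the standing boundedness and second-order assumptions, so its expectation vanishes. Interchanging expectation with the Lebesgue integral (justified by the same integrability) yields $m(t) = m_0 + \int_{t_0}^t Am\,ds$, i.e. $\dot m = Am$ with $m(t_0)=m_0$.

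For the covariance I would apply the multidimensional It\^o formula to the matrix-valued outer product $\eta\eta^\ast$. The crucial ingredient is the quadratic-variation term: using the It\^o rules $dt\,dt=0$, $dt\,dw=0$ and $dw\,dw^\ast = I\,dt$, the second-order contribution $(d\eta)(d\eta)^\ast$ collapses to $BB^\ast\,dt$, which is exactly the inhomogeneous term in \eqref{eq:cov-lna}. This gives $d(\eta\eta^\ast) = \left(A\eta\eta^\ast + \eta\eta^\ast A^\ast + BB^\ast\right)dt + B\,dw\,\eta^\ast + \eta\,dw^\ast B^\ast$. Taking expectations annihilates the two stochastic-integral terms, again by the martingale property, so the second moment $Q = \E[\eta\eta^\ast]$ obeys $\dot Q = AQ + QA^\ast + BB^\ast$. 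Writing $P = Q - mm^\ast$ and substituting the already-established $\dot m = Am$ then yields $\dot P = \dot Q - (Am)m^\ast - m(Am)^\ast = A(Q-mm^\ast) + (Q-mm^\ast)A^\ast + BB^\ast = AP + PA^\ast + BB^\ast$, with $P(t_0)=P_0$.

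A cleaner variant, which I would likely present instead, is to center the process first: setting $\tilde\eta = \eta - m$ gives $d\tilde\eta = A\tilde\eta\,dt + B\,dw$ with $\E[\tilde\eta]=0$, and applying It\^o directly to $\tilde\eta\tilde\eta^\ast$ produces \eqref{eq:cov-lna} in one step without the subtraction of $mm^\ast$. The only genuine subtlety, and the step I would be most careful about, is the rigorous justification that the expectations of the It\^o integrals vanish and that expectation and integration commute; this is where the standing assumptions that $A$, $B$ are bounded and that $\eta$ is second-order are needed. Note that Gaussianity of the initial data is \emph{not} required for the moment equations themselves — only the initial values $m_0$ and $P_0$ enter — although it is what guarantees that the law of $\eta(t)$ remains exactly $\cN(m(t),P(t))$ for all $t$.
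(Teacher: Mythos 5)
Your proof is correct, but it takes a genuinely different route from the paper's. The paper works from the explicit variation-of-constants representation: it introduces the state transition matrix $\Phi(t,t_0)$, writes $\eta(t)=\Phi(t,t_0)\eta(t_0)+\int_{t_0}^t\Phi(t,s)B(s)\,dw(s)$, obtains the mean by taking expectations of this formula, and obtains the covariance by expanding $\E[\eta(t)\eta(t)^\ast]$ into $\Phi(t,t_0)P_0\Phi^\ast(t,t_0)+\int_{t_0}^t\Phi(t,\delta)B(\delta)B^\ast(\delta)\Phi^\ast(t,\delta)\,d\delta$ and then differentiating this integral representation in $t$ (the reduction to zero mean is handled, as in your ``cleaner variant,'' by assuming $m_0=0$ without loss of generality). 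You instead bypass the explicit solution entirely: you derive the mean equation by taking expectations of the integral form of the SDE, and the covariance equation by applying It\^o's formula to the outer product $\eta\eta^\ast$, using $dw\,dw^\ast=I\,dt$ to produce the $BB^\ast$ term, and then subtracting $mm^\ast$. Both arguments are standard and rest on the same two justifications (vanishing expectation of the It\^o integrals and interchange of expectation with time integration, which you correctly flag as the only delicate points). The paper's approach has the side benefit of producing a closed-form expression for $P(t)$ that makes positive semidefiniteness manifest and could be reused elsewhere; your approach is shorter, avoids differentiating under the integral sign, and generalises more readily to settings where no transition matrix is available. Your closing remark that Gaussianity of the initial condition is not needed for the moment equations themselves is also correct and is a point the paper does not make explicit.
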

%

We start the discussion of  time-scale separation of the LNA by assuming that the state vector of~\eqref{eq:LNA} has been appropriately permuted and partitioned as:
\begin{equation}\label{eq:partition}
\begin{bmatrix} x^\ast & \eta^\ast \end{bmatrix}^\ast = \begin{bmatrix} x_1^\ast & x_2^\ast & \eta_1^\ast & \eta_2^\ast \end{bmatrix}^\ast.
\end{equation}
The vector field $g$ and the matrices $A$ and $B$ can then be conformally partitioned according to~\eqref{eq:partition}. Note that the \emph{true state} of the system is given by $x + \eta$. Assume that $x_1 + \eta_1$ varies on the time scale, which is $\varepsilon$ times slower than the time scale of $x_2 + \eta_2$. In this case the LNA~\eqref{eq:LNA} can be written as follows~\cite{thomas2012rigorous}:
\begin{subequations}\label{eq:full-model}
\begin{eqnarray}
\dot x_1 &=& g_1(x_1, x_2), \label{eq:full-model-x1}\\
\dot x_2 &=& \varepsilon^{-1} g_2(x_1, x_2), \label{eq:full-model-x2}\\
\dot \eta_1 &=& A_{1 1} \eta_1 + \varepsilon^{-1/2} A_{1 2} \eta_2+ 
\label{eq:full-model-eta1}  B_1 \dot w, \\
\dot \eta_2 &=& \varepsilon^{-1/2} A_{2 1} \eta_1 + \varepsilon^{-1} A_{2 2}\eta_2 +
\varepsilon^{-1/2} B_2 \dot w. \label{eq:full-model-eta2}
\end{eqnarray}   
\end{subequations}

Under standard conditions on time-scale separation, the reduced order model is given by the following equations:
\begin{subequations}\label{red-model}
\begin{align}
&\dot z   = g_1(z, \hat z), \label{full-model-x1}\\
&\dot \xi = A_r(z, \hat z)\xi+ B_r(z, \hat z) \dot w, \label{red-model-xi}
\end{align}   
\end{subequations}
where $\hat z$ is the unique root of the equation  $g_2(x_1, x_2) = 0$ solved with respect to $x_2$, and 
\begin{equation}\label{def:ab}
\begin{gathered}
A_r = A_{1 1}- A_{1 2} A_{2 2}^{-1} A_{2 1}, \\
B_r = B_{1}- A_{1 2} A_{2 2}^{-1}B_{2}.
\end{gathered}
\end{equation}
 We can now present the first main result of the paper:
\begin{theorem} \label{thm:conv} Consider the system~\eqref{eq:full-model}, where $g_1(x)$, $g_2(x)$ are continuously differentiable functions with bounded derivatives and $A_{2 2}$ is invertible along the trajectory of the full order model~\eqref{eq:full-model} and locally exponentially stable for all $x$. Let the system~(\ref{eq:full-model-x1}-\ref{eq:full-model-x2}) satisfy standard assumptions on time-scale separation in~\cite[pp. 9--11]{kokotovic1999singular}. Then there exists $\varepsilon_1$ such that for all $\varepsilon$ satisfying $\varepsilon_1 \ge \varepsilon \ge 0$ we have
\[
\sup\limits_{0 \le t \le T}\left\|
z - x_1\right\|_2 = O(\varepsilon)~~
\sup\limits_{0 \le t \le T}\E\left\|
\xi - \eta_1 \right\|_2^2 = O(\varepsilon) 
\]
where $z$, $\xi$ are solutions to~\eqref{red-model}.
\end{theorem}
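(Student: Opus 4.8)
The plan is to separate the statement into its deterministic and stochastic parts and to reduce the stochastic claim to a purely deterministic singular-perturbation problem for the first two moments. The deterministic estimate $\sup_{0\le t\le T}\|z-x_1\|_2=O(\varepsilon)$ is exactly the conclusion of Tikhonov's theorem applied to the slow/fast pair~(\ref{eq:full-model-x1}-\ref{eq:full-model-x2}): smoothness of $g_1,g_2$, unique solvability of $g_2(x_1,x_2)=0$ for $x_2=\hat z$, and exponential stability of the boundary layer are the standard assumptions we have imposed, the boundary-layer Jacobian being precisely $A_{22}=\partial g_2/\partial x_2$, which is assumed Hurwitz. I would cite~\cite{kokotovic1999singular} for this and record that $x_1(t,\varepsilon)=z(t)+O(\varepsilon)$ uniformly on $[0,T]$ and $x_2(t,\varepsilon)=\hat z(t)+O(\varepsilon)$ outside an initial $O(\varepsilon)$ boundary layer; consequently the coefficient matrices $A_{ij}(x(t,\varepsilon))$ and $B_i(x(t,\varepsilon))$ are $O(\varepsilon)$-close to their values on the slow manifold.

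For the stochastic part, the key observation is that $\eta_1$, $\eta_2$ and $\xi$ are jointly Gaussian and are driven by the \emph{same} Weiner process $w$, so $\E\|\xi-\eta_1\|_2^2$ is a fixed quadratic functional of the first and second moments of the augmented state $\zeta=[\eta_1^\ast\ \eta_2^\ast\ \xi^\ast]^\ast$. By Proposition~\ref{prop:cov} the mean $\E\zeta$ and the covariance $\Pi=\cov(\zeta)$ obey deterministic linear ODEs, so the whole problem collapses to a deterministic singular-perturbation analysis. Writing $E=[\,I\ \ 0\ \ -I\,]$ I would use $\E\|\xi-\eta_1\|_2^2=\tr(E\Pi E^\ast)+\|E\,\E\zeta\|_2^2$ and estimate each term. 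To put the moment ODEs into standard singularly-perturbed form I would rescale the fast moments: $m_2=\varepsilon^{1/2}\mu_2$ for the mean, and $P_{12}=\varepsilon^{1/2}Q_{12}$, $\cov(\eta_2,\xi)=\varepsilon^{1/2}R_{23}$ for the covariance, while keeping $P_{22}=\cov(\eta_2)$ at $O(1)$. After this rescaling the fast subsystem is governed by the Lyapunov/Sylvester operators $X\mapsto A_{22}X$, $X\mapsto XA_{22}^\ast$ and $X\mapsto A_{22}X+XA_{22}^\ast$, which are Hurwitz precisely because $A_{22}$ is; this is the structural fact making the reduction legitimate, and I would isolate it as a technical lemma in the Appendix.

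Solving for the quasi-steady-state values yields $\bar P_{22}$ as the solution of the Lyapunov equation $A_{22}\bar P_{22}+\bar P_{22}A_{22}^\ast+B_2B_2^\ast=0$, together with $\bar\mu_2=-A_{22}^{-1}A_{21}\bar m_1$ and the analogous expressions for $\bar Q_{12}$, $\bar R_{23}$. Substituting these back into the slow blocks, the reduced mean equation collapses to $\dot{\bar m}=A_r\bar m$, and — after the cancellation enabled by the Lyapunov identity for $\bar P_{22}$ — the reduced equations for $\cov(\eta_1)$ and for the cross-covariance $\cov(\eta_1,\xi)$ both collapse to the single Lyapunov ODE $\dot P=A_rP+PA_r^\ast+B_rB_r^\ast$ with $A_r,B_r$ as in~\eqref{def:ab}. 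These are exactly the moment equations that Proposition~\ref{prop:cov} associates with the reduced SDE~\eqref{red-model-xi} for $\xi$. Hence, using matched initial data $\eta_1(0)=\xi(0)$, the quasi-steady-state covariance blocks satisfy $\bar\Pi_{11}=\bar\Pi_{13}=\bar\Pi_{13}^\ast=\Pi_{33}$ and $\bar m_1=\E\xi$, so the quasi-steady-state values of $E\Pi E^\ast$ and $E\,\E\zeta$ vanish identically. Applying the deterministic singular-perturbation theorem~\cite{kokotovic1999singular} to the rescaled augmented moment system then upgrades this to $\tr(E\Pi E^\ast)=O(\varepsilon)$ and $\|E\,\E\zeta\|_2=O(\varepsilon)$ uniformly on $[0,T]$, giving $\sup_{0\le t\le T}\E\|\xi-\eta_1\|_2^2=O(\varepsilon)$.

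I expect the main obstacle to be twofold. First, verifying that the elimination of the fast moments genuinely reproduces $A_r$ and $B_r$: this rests entirely on the Lyapunov identity for $\bar P_{22}$ and must be checked blockwise, the cross-covariance block being the most delicate. Second, and more conceptually, the mean-square conclusion is a statement about the \emph{joint} law of $(\eta_1,\xi)$, not merely about matching marginal moments; it follows only because both processes are driven by the same $w$, which is what forces the cross-covariance $\Pi_{13}$ to converge to the common limit rather than decorrelating. The $\varepsilon$-dependence of the $\eta$-coefficients through $x(t,\varepsilon)$ contributes only an additional $O(\varepsilon)$ term, absorbed by the first-step estimate, and so does not affect the order of the bound.
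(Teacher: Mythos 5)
Your argument is correct, but it follows a genuinely different route from the paper's. The paper works pathwise: it rewrites the slow fluctuation equation so that $\dot\eta_1 = A_r(x_1,x_2)\eta_1 + B_r(x_1,x_2)\dot w + A_{12}A_{22}^{-1}\bigl(\varepsilon^{-1/2}A_{22}\eta_2 + A_{21}\eta_1 + B_2\dot w\bigr)$, subtracts the reduced SDE to get the integral identity \eqref{eq:err-st}, bounds the coefficient-mismatch terms via the It\^o isometry and the Lipschitz estimates of Lemma~\ref{lem:lip-bounds}, identifies the residual term as $\varepsilon^{1/2}\int_0^t A_{12}A_{22}^{-1}\,d\eta_2$ whose second moment is $O(\varepsilon)$ by boundedness of $\cov(\eta_2)$ (Lemma~\ref{lem:c1-bounds}), and closes with Gronwall (Lemma~\ref{prop:gronwall}). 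You instead exploit linearity and joint Gaussianity to push the whole problem into the first two moments of the augmented state $(\eta_1,\eta_2,\xi)$, rescale the fast moment blocks, and apply deterministic Tikhonov theory to the resulting Lyapunov/Sylvester ODEs. Both arguments are sound; I checked that your blockwise elimination does reproduce $A_r$ and $B_r$ from \eqref{def:ab} (the Lyapunov identity $A_{22}\bar P_{22}+\bar P_{22}A_{22}^\ast+B_2B_2^\ast=0$ supplies exactly the term $A_{12}A_{22}^{-1}B_2B_2^\ast A_{22}^{-\ast}A_{12}^\ast$ needed to complete $B_rB_r^\ast$), and your emphasis on the cross-covariance $\Pi_{13}$ is exactly the right conceptual point --- it is what distinguishes mean-square convergence from mere agreement of marginals, and the paper achieves the same effect implicitly by driving both SDEs with the same $w$ in \eqref{eq:err-st}. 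What each approach buys: the paper's Gronwall argument is shorter, needs no moment bookkeeping, and extends more naturally to settings where Gaussianity fails (e.g.\ state-dependent diffusion); yours eliminates stochastic calculus entirely beyond Proposition~\ref{prop:cov}, makes the error bound a corollary of classical deterministic singular perturbation, and exposes precisely which covariance blocks must converge. Two small points you should make explicit if you write this up: (i) Proposition~\ref{prop:cov} must be applied to the \emph{augmented} linear time-varying SDE so that the cross-covariance $\cov(\eta,\xi)$ is covered, and (ii) the coefficient mismatch between $A_r(z,\hat z)$ and $A_{ij}(x(t,\varepsilon))$ includes the initial boundary layer where $x_2-\hat z$ is $O(1)$ on an interval of length $O(\varepsilon)$; it contributes $O(\varepsilon)$ only in the integrated sense, which is the same estimate the paper uses in Lemma~\ref{lem:lip-bounds}.
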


Before providing the proof we note that this reduced order model was derived in~\cite{thomas2012rigorous}, however, it was only argued that there is convergence in distribution with $\varepsilon\rightarrow 0$. This method can be seen as a type of stochastic averaging, since fast variables $x_2+\eta_2$ are essentially integrated out. We also note that a similar convergence result to the reduced order model (\ref{full-model-x1},\ref{red-model-xi}) can be shown if the fluctuations $\eta_1$, $\eta_2$ evolve according to the following model
\begin{gather*}
\dot{\tilde \eta}_1 = A_{1 1} \tilde \eta_1 +  A_{1 2} \tilde \eta_2 + B_1 \dot w, \\
\dot{\tilde \eta}_2 = \varepsilon^{-1} (A_{2 1} \tilde \eta_1 + A_{2 2} \tilde \eta_2 + B_2 \dot w).
\end{gather*}
This model can be obtained from~(\ref{eq:full-model-eta1},\ref{eq:full-model-eta2}) by a change of variables $\tilde \eta_1 = \eta_1$, $\tilde \eta_2 = \varepsilon^{-1/2} \eta_2$. The following lemma is required before we can state the proof:
\begin{lem}\label{lem:lip-bounds} The functions $A_r(\cdot)$, $B_r(\cdot)$ defined in~\eqref{def:ab} satisfy the following bounds 
\begin{align}
\notag \int_0^t\|A_r(z, \hat z) - A_r(x_1, x_2)\|_2^2 d\tau \le O(\varepsilon), \\
\notag \int_0^t\|B_r(z, \hat z)-B_r(x_1, x_2)\|_2^2 d\tau \le O(\varepsilon), \\
\notag \|A_r(x_1, x_2)\|_2^2 \le K_{1},
\end{align}
where $x_1(t)$, $x_2(t)$, $z(t)$, $\hat z$ are defined in~\eqref{eq:full-model}, and~\eqref{red-model}.
\end{lem}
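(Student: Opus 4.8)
The plan is to reduce all three bounds to two consequences of the singular-perturbation (Tikhonov) hypotheses imposed on the deterministic subsystem~(\ref{eq:full-model-x1}--\ref{eq:full-model-x2}): namely (i) the uniform slow estimate $\sup_{0\le t\le T}\|z-x_1\|_2 = O(\varepsilon)$, and (ii) a boundary-layer estimate for the fast variable. Writing $h$ for the quasi-steady-state map defined by $g_2(x_1,h(x_1))=0$ (so that $\hat z = h(z)$), local exponential stability of $A_{22}$ yields, via Tikhonov's theorem, constants $c,\alpha>0$ with
\[
\|x_2(t)-h(x_1(t))\|_2 \le c\,e^{-\alpha t/\varepsilon} + O(\varepsilon), \quad t\in[0,T].
\]
I would first dispatch the pointwise bound $\|A_r(x_1,x_2)\|_2^2\le K_1$. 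Since $g_1,g_2$ have bounded derivatives, the trajectory of~\eqref{eq:full-model} stays in a fixed compact set $\cD_*$ on $[0,T]$; on $\cD_*$ the blocks $A_{ij}$ are continuous and $A_{22}$ is invertible with $\|A_{22}^{-1}\|_2$ bounded (exponential stability keeps its spectrum away from the origin), so $A_r=A_{11}-A_{12}A_{22}^{-1}A_{21}$ is bounded on $\cD_*$ and the claim follows.

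For the two integral bounds I would split each integrand through the intermediate argument $(x_1,h(x_1))$,
\[
\|A_r(z,\hat z)-A_r(x_1,x_2)\|_2 \le \|A_r(z,h(z))-A_r(x_1,h(x_1))\|_2 + \|A_r(x_1,h(x_1))-A_r(x_1,x_2)\|_2,
\]
and treat $B_r$ identically. On $\cD_*$ the maps $A_r$, $B_r$ and $h$ are Lipschitz ($A_{22}^{-1}$ is a Lipschitz function of its argument, and $h$ is Lipschitz by the implicit function theorem, using invertibility of $A_{22}$ together with the smoothness assumptions of~\cite{kokotovic1999singular}). Hence the first term is at most $L(\|z-x_1\|_2+\|h(z)-h(x_1)\|_2)\le L'\|z-x_1\|_2 = O(\varepsilon)$ uniformly by~(i); squaring and integrating over $[0,t]\subseteq[0,T]$ contributes only $O(\varepsilon^2)$, which is absorbed into $O(\varepsilon)$.

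The crux is the second term, since $x_2-h(x_1)$ is genuinely $O(1)$ inside the initial boundary layer, so a uniform bound alone would give only $O(1)$. The Lipschitz property bounds this term by $L\|x_2-h(x_1)\|_2$, reducing matters to $\int_0^t\|x_2-h(x_1)\|_2^2\,d\tau$, and here I would exploit the $1/\varepsilon$ decay rate in~(ii):
\[
\int_0^t\big(c\,e^{-\alpha\tau/\varepsilon}+O(\varepsilon)\big)^2 d\tau \le 2c^2\!\int_0^\infty e^{-2\alpha\tau/\varepsilon}\,d\tau + O(\varepsilon^2)\,T = \frac{c^2}{\alpha}\,\varepsilon + O(\varepsilon^2) = O(\varepsilon).
\]
Thus the exponentially thin boundary layer, though of $O(1)$ height, integrates to $O(\varepsilon)$ in mean square --- exactly what the convergence proof of Theorem~\ref{thm:conv} will consume. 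Combining the slow and fast contributions gives the bound for $A_r$, and the argument for $B_r$ is verbatim with $B_1,B_2$ in place of $A_{11},A_{21}$. The only delicate points are the Lipschitz regularity of $A_r$, $B_r$, $h$ (which rests on boundedness of $A_{22}^{-1}$ and the standard smoothness assumptions) and the boundary-layer estimate~(ii), which is the standard output of Tikhonov's theorem under exponential stability of $A_{22}$.
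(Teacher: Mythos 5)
Your proof is correct and follows essentially the same route as the paper's: Lipschitz continuity of $A_r$, $B_r$ on a compact set reduces the integral bounds to $L\int_0^t\bigl(\|x_1-z\|_2^2+\|x_2-\hat z\|_2^2\bigr)d\tau$, with the slow error handled by the Tikhonov estimate $x_1-z=O(\varepsilon)$ and the fast error by convergence of $x_2$ to the quasi-steady state. Your explicit integration of the $e^{-\alpha\tau/\varepsilon}$ boundary-layer term to get $O(\varepsilon)$ merely supplies the detail that the paper's proof only asserts (``$x_2(t)$ asymptotically converges to $\hat z(t)$, hence $\int_0^t\|x_2-\hat z\|_2^2\,d\tau\le O(\varepsilon)$''), so it is a more complete rendering of the same argument rather than a different one.
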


\begin{proof}
According to~\cite{kokotovic1999singular} the root $\hat z$ exists and for all $t$ such that $0\le t\le T$ we have that
\begin{equation}
\label{bounds:tss}
\begin{aligned}
x_1(t) - z(t)= O(\varepsilon),
\end{aligned}
\end{equation}
therefore the statement is well-posed. 

We begin with the final inequality in the lemma. As the elements of the matrix $A_r$ are polynomials it follows from the definition on the operator norm and the fact that polynomial functions are Lipschitz on a compact domain of arbitrary size c.f. Lemma~\ref{lem:lip-bounds-proof} in Appendix \ref{app:convergence}. Using this bound and the equivalence of norm property, we can show the second inequality as follows
\begin{multline*}
\notag \int_0^t\|A_r(z, \hat z) - A_r(x_1, x_2)\|_2^2 d\tau \\
\le \int_0^t\|A_r(z, \hat z) - A_r(x_1, x_2)\|_{F}^2 d\tau \\
\le L_1 \int_0^t(\|x_1(t) - z(t)\|_2^2 + \|x_2(t) - \hat z(t)\|_2^2) d\tau,
\end{multline*}
where $\|\cdot\|_F$ is the Frobenius matrix norm.

Furthermore, $x_2(t)$ asymptotically converges to $\hat z(t)$ with $\varepsilon \rightarrow 0$, hence there exists a small enough $\varepsilon_1$ such that for all $0 \le \varepsilon \le \varepsilon_1$ we have $\int_0^t\|x_2(t) - \hat z(t)\|_2^2 d\tau \le O(\varepsilon)$. The same argument is used for the inequality involving $B_r$. 
\end{proof}
We are now ready to present the proof of Theorem \ref{thm:conv}.

\begin{proof-of}{Theorem~\ref{thm:conv}} 
According to \cite{kokotovic1999singular} the root $\hat z$ exists and for all $t$ such that $0\le t\le T$ we have that $x_1(t) - z(t)= O(\varepsilon)$, therefore we only need to prove convergence of the fluctuation dynamics. We can rewrite the equation for the slow perturbation variable as follows:
\begin{align*}
 \dot{\eta}_1 &= A_{1 1} \eta_1 + \varepsilon^{-1/2} A_{1 2}\eta_2 + B_1 \dot w \\&= A_r(x_1, x_2) \eta_1 + B_r(x_1,x_2) \dot w\\
    &\quad+ A_{1 2} A_{2 2}^{-1}(\varepsilon^{-1/2} A_{2 2} \eta_2 + A_{2 1} \eta_1 + B_2 \dot w)
\end{align*}

Taking into account this representation we obtain
\begin{multline}
\label{eq:err-st}
\xi(t)- \eta_1(t) = \int_0^{t} (B_r(z, \hat z) - B_r(x_1, x_2)) \dot w  d\tau + \\
\int_0^{t} \left(A_r(z, \hat z)- A_r(x_1, x_2)\right)\xi  d\tau + \int_0^{t} A_r(x_1, x_2) \left(\xi -  \eta_1 \right) d\tau \\
+\underbrace{\int_{0}^{t} A_{1 2} A_{2 2}^{-1}(\varepsilon^{-1/2} A_{2 2} \eta_2   + A_{2 1} \eta_1 + B_2 \dot w) d \tau}_{C_1(t)},
\end{multline}
where the matrices $A_{i j}$, $B_i$ depend on $x(\tau)$. We prove the main result by showing that the expectation of each of the terms on the right hand side of \eqref{eq:err-st} are of order $O(\varepsilon)$. Due to the It\^o isometry rule (cf.~\cite{Reiss2003}) we have  
\begin{multline*}
 \E \left\|\int_0^{t} (B_r(z, \hat z)   - B_r(x_1, x_2)) \dot w  d\tau\right\|_2^2 \\
 = \int_0^{t} \left\|B_r(z, \hat z) - B_r(x_1, x_2)\right\|_F^2  d\tau  \\
 \le L_1 \int_0^{t} \|B_r(z, \hat z) - B_r(x_1, x_2)\|_2^2 d\tau \le O(\varepsilon),
\end{multline*}
where the last inequality is due to Lemma~\ref{lem:lip-bounds}.  By using consecutively Jensen, Cauchy-Schwartz inequalities, and  the bounds in Lemma~\ref{lem:lip-bounds} we have
\begin{multline*}
\E\left\|\int_0^{t} \left(A_r(z, \hat z)-A_r(x_1, x_2)\right)\xi  d\tau\right\|_2^2  \\ \le \int_0^{t} \E\|\left(A_r(z, \hat z)-A_r(x_1, x_2)\right)\xi\|_2^2  d\tau  \\ \le
L_2 \int_0^{t} \left\|A_r(z, \hat z)-A_r(x_1, x_2)\right\|_F^2 d\tau \int_0^{t} \E \xi^2  d\tau \le O(\varepsilon),
\end{multline*}
Similarly we can show that 
\begin{gather*}
\E\left\|\int_0^{t} A_r(x_1, x_2) (\eta_1 - \xi ) d\tau\right\|_2^2 \le K_1 \int_0^t \E \|\xi(t) -  \eta_1(t)\|^2 d\tau
\end{gather*}
for some positive $K_1$. Using Lemma \ref{lem:c1-bounds} (see Appendix \ref{app:convergence}) it is shown that $E\|C_1(t)\|^2  = O(\varepsilon)$. Finally, 
let $m_\varepsilon(t) = \E \|\xi(t) -  \eta_1(t)\|_2^2$, by applying the previous bounds to~\eqref{eq:err-st}, we obtain
\[
m_\varepsilon(t) \le K_1 \int_{0}^{t} m_\varepsilon(\tau) d \tau + O(\varepsilon).
\]
Therefore by Lemma~\ref{prop:gronwall} (see Appendix~\ref{app:convergence}) we have that 
\[
m_\varepsilon(t) \le O(\varepsilon) e^{K_1 t}.
\]
As the above inequality is considered only on a finite interval it follows that $m_\varepsilon(t) = O(\varepsilon)$ for all $t\in[0, T]$. 
\end{proof-of}

\section{Solutions to Structured Model Reduction by Balancing\label{s:mor}}
We now turn our attention to structured model reduction. A method for computing structure preserving reduced order models is presented as well as some remarks on existence of structured solutions. Then in Section \ref{s:proj-lna} using insight from the previous section we apply our algorithm to the LNA approximation of the CME. 
\subsection{(Structured) Model Order Reduction by Balancing\label{ss:smor}}
Assume that we have a stable system with $n_i$ inputs, $n_o$ outputs, $n$ states. We will adopt the following shorthand notation for a realisation of a system:
\begin{equation}\label{eq:original_real}
\cG = \left[\begin{array}{c|c} A & B \\ \hline\\[-9pt]
 C & D \end{array} \right] ~ \Leftrightarrow ~\cG(s)=C(sI-A)^{-1}B +D.
\end{equation}

The essential step in projection-based methods is naturally the computation of the so-called projectors $V \in \R^{k\times n}$ and $W \in \R^{k\times n}$, where $k<n$ is a pre-defined order of the reduced model. Given the projectors the approximate  model  is computed as follows:
\begin{equation}
\cG^k = \left[\begin{array}{c|c} V A W & V B \\ \hline\\[-9pt] C W & D \end{array} \right], \label{real:proj-based-mor}
\end{equation}
where the superscript indicates the state dimension. In particular we would like to minimise 
\[\|\cG - \cG^k\|_{\Hinf},
\] 
where \begin{align*}
\|\cG\|_{\Hinf}:= \sup_{\text{Re}(s)>0} \bar{\sigma}[\cG(s)] = \text{ess }\sup_{\omega \in \R}\bar{\sigma}[\cG(j\omega)]. 
\end{align*}

The projectors $V$ and $W$ can be computed using interpolation methods based on Krylov subspace techniques (cf.~\cite{AntoulasBook}). However, here we will employ balancing tools (cf.~\cite{AntoulasBook}) because in this case we can compute \emph{structured projectors} introduced in the sequel. 

First we cover the celebrated balanced truncation method and consider the Lyapunov equations
\begin{subequations}\label{eq:lyap}
\begin{eqnarray}
A P +P A^\ast + B B^\ast &=&0\\
A^\ast Q+ Q A + C^\ast C &=&0
\end{eqnarray}   
\end{subequations}
which are associated with the realisation of $\cG$. If $A$ is asymptotically stable then there exist unique solutions $P\succeq 0$, $Q\succeq0$ to~\eqref{eq:lyap}, which are called controllability and observability Gramians, respectively. If additionally $(A, B)$ is controllable and/or $(A,C)$ is observable, then the respective Gramian will be positive definite. The eigenvalues $\sigma_i$ of the matrix $(P Q)^{1/2}$ are referred to as \emph{Hankel singular values}. We call a realisation \emph{balanced}, if $P = Q = \Sigma = \diag{\sigma_1,\dots,\sigma_n}$. The following proposition summarises two key balancing methods for model reduction.

\begin{prop}\label{prop:br} Consider a realisation of $\cG$ from~\eqref{eq:original_real} and assume there exist positive definite matrices $P$ and $Q$ satisfying~\eqref{eq:lyap}. Let $\sigma_i$ be the Hankel singular values of $\cG$, let $k$ be such that $\sigma_i \ne \sigma_j$ for all $i\le k$, $j>k$. Let $T$ be such that  $T P T^\ast = (T^\ast)^{-1} Q T^{-1}=\Sigma = \diag{\sigma_1,\dots, \sigma_n}$ and consider the realisation with the following partitioning:
\begin{equation}\label{eq:balanced_real}
\hat \cG =  \left[\begin{array}{c|c} T A T^{-1} & T B \\ \hline\\[-9pt] C T^{-1}& D \end{array} \right] =\left[\begin{array}{c c|c} \hat A_{1 1} & \hat A_{1 2} & \hat B_1 \\
                                       \hat A_{2 1} & \hat A_{2 2} & \hat B_2 \\
 \hline\\[-9pt] 
 \hat C_1 & \hat C_2 & D \end{array} \right],
\end{equation}
where $\hat A_{1 1}\in \R^{k\times k}$, $\hat B_{1}\in \R^{k\times n_i}$, $\hat C_{1}\in \R^{n_o\times k}$. Then $\hat \cG$ is balanced and the reduced order realisations $\hat \cG_1^k$, $\hat \cG_2^k$ defined as
\begin{gather*}
\hat \cG_1^k =  \left[\begin{array}{c|c} \hat A_{1 1} & \hat B_1 \\ \hline\\[-9pt] \hat C_1 & D \end{array} \right], \\
\hat \cG_2^k =  \left[\begin{array}{c|c} \hat A_{1 1} - \hat A_{1 2} \hat A_{2 2}^{-1} \hat A_{2 1} & \hat B_1 -\hat A_{1 2}\hat A_{1 2}^{-1} \hat B_{2}   \\ \hline\\[-9pt] \hat C_1 -\hat C_{2}\hat A_{2 2}^{-1} \hat A_{2 1}  & D-\hat C_{2}\hat A_{2 2}^{-1} \hat B_{2}  \end{array} \right] .
\end{gather*}
are both asymptotically stable, balanced,  and satisfy the error bounds:
\begin{gather*}
\|\cG - \hat \cG_1^k\|_{\Hinf}  \le 2\sum\limits_{i = k+1}^n \sigma_{i},\quad
\|\cG - \hat \cG_2^k\|_{\Hinf} \le 2\sum\limits_{i = k+1}^n \sigma_{i}.
\end{gather*}
\end{prop}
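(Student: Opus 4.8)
The plan is to proceed in three stages: construct the balancing transformation and identify the Hankel singular values, show that each reduced realisation is itself balanced and asymptotically stable, and finally establish the $\Hinf$ error bound.

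\emph{Balancing.} Since $P \succ 0$ I would factor $P = R R^\ast$ with $R$ invertible. The matrix $R^\ast Q R$ is symmetric positive definite, so it admits an orthogonal eigendecomposition $R^\ast Q R = U \Sigma^2 U^\ast$ with $\Sigma = \diag{\sigma_1, \dots, \sigma_n} \succ 0$. Because $P Q = R R^\ast Q$ is similar to $R^\ast Q R$, the $\sigma_i$ are precisely the eigenvalues of $(P Q)^{1/2}$, i.e. the Hankel singular values. Taking $T = \Sigma^{1/2} U^\ast R^{-1}$ one checks directly that $T P T^\ast = (T^\ast)^{-1} Q T^{-1} = \Sigma$, which produces the balanced realisation $\hat\cG$ in~\eqref{eq:balanced_real}.

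\emph{Balanced, stable reduced models.} Partition $\Sigma = \diag{\Sigma_1, \Sigma_2}$ conformally with $\hat\cG$, where $\Sigma_1 = \diag{\sigma_1, \dots, \sigma_k}$. Extracting the $(1,1)$ blocks of the two balanced Lyapunov equations gives $\hat A_{1 1}\Sigma_1 + \Sigma_1 \hat A_{1 1}^\ast + \hat B_1 \hat B_1^\ast = 0$ and $\hat A_{1 1}^\ast \Sigma_1 + \Sigma_1 \hat A_{1 1} + \hat C_1^\ast \hat C_1 = 0$, so $\Sigma_1$ is a common Gramian of $\hat\cG_1^k$ and the truncation is balanced. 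Asymptotic stability of $\hat A_{1 1}$ is the first nontrivial point: the Lyapunov equation only yields $\mathrm{Re}\,\lambda \le 0$, so to exclude imaginary-axis eigenvalues I would invoke a Pernebo--Silverman-type argument, using the hypothesis $\sigma_i \ne \sigma_j$ for $i \le k < j$ to preclude a shared invariant structure between the two blocks. For $\hat\cG_2^k$ I would avoid repeating this analysis by passing to the reciprocal system, with realisation $(A^{-1}, A^{-1} B, -C A^{-1}, D - C A^{-1} B)$, which satisfies $\tilde\cG(s) = \cG(1/s)$ and shares both Gramians $P$, $Q$ (hence the same balanced form and the same $\sigma_i$). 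A block-inverse (Schur complement) computation shows that balanced truncation of $\tilde\cG$, reciprocated back, is exactly $\hat\cG_2^k$, so its stability and balancedness follow from the truncation case.

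\emph{Error bound (main obstacle).} I would first reduce to a single removal: grouping the discarded singular values into blocks of equal value and deleting them one block at a time, the bound $2\sum_{i = k+1}^n \sigma_i$ follows by the triangle inequality once each single-block step is shown to cost at most $2\sigma$, with $\sigma$ the repeated value being removed. The crux is this one-step estimate — for a balanced system whose trailing Gramian block equals $\sigma I$, $\|\cG - \hat\cG_1^k\|_{\Hinf} \le 2\sigma$ — and this is where the real work lies. I would prove it by analysing the error system $E(s) = \cG(s) - \hat\cG_1^k(s)$ and showing $\bar\sigma[E(j\omega)] \le 2\sigma$ for all $\omega$, using the balanced identities above to build the relevant spectral factorisation (equivalently, an all-pass dilation of the normalised error). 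Finally, because $\|\cdot\|_{\Hinf}$ is invariant under the frequency substitution $\omega \mapsto 1/\omega$ and $\tilde\cG$ carries the same tail singular values, the identical bound for the singular-perturbation model follows at once from the reciprocal correspondence: $\|\cG - \hat\cG_2^k\|_{\Hinf}$ equals the $\Hinf$ truncation error of $\tilde\cG$, which is at most $2\sum_{i = k+1}^n \sigma_i$.
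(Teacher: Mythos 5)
The paper does not actually prove Proposition~\ref{prop:br}: it is invoked as a classical result, with $\hat\cG_1^k$ attributed to Moore~\cite{Moore1981}, $\hat\cG_2^k$ to Liu and Anderson~\cite{liu1989singular}, and the reciprocal-system remark after the proposition doing the work of transferring everything from truncation to singular perturbation. Your outline reproduces exactly the standard route those references take, and every step you state is correct: the factorisation $P=RR^\ast$, $R^\ast Q R = U\Sigma^2 U^\ast$, $T=\Sigma^{1/2}U^\ast R^{-1}$ does balance the realisation and identifies the $\sigma_i$ with the eigenvalues of $(PQ)^{1/2}$; the $(1,1)$ blocks of the balanced Lyapunov equations show $\hat\cG_1^k$ is balanced; the Pernebo--Silverman argument is precisely what the hypothesis $\sigma_i\ne\sigma_j$ for $i\le k<j$ is there for (and it also gives you stability, hence invertibility, of $\hat A_{22}$, which you should note is needed before $\hat\cG_2^k$ is even well defined); and the reciprocal system $(A^{-1},A^{-1}B,-CA^{-1},D-CA^{-1}B)$ does share both Gramians and reduces the singular-perturbation claims to the truncation ones --- this is the same device the paper mentions. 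Your one-block-at-a-time reduction plus triangle inequality is also the standard way the summed bound is obtained, and it is compatible with the paper's remark that the $\sigma_i$ need not be ordered.

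The one genuine gap is the step you yourself flag as the crux: the claim that removing a trailing block with Gramian $\sigma I$ from a balanced realisation costs at most $2\sigma$ in $\Hinf$. Everything else in your argument is bookkeeping around this estimate, and you only name the technique (all-pass dilation of the error system / spectral factorisation using the balanced identities) without constructing it. Since this inequality is the entire analytic content of the error bound --- the Enns/Glover computation that exhibits $E(s)=\cG(s)-\hat\cG_1^k(s)$ as $2\sigma$ times a contraction on the imaginary axis --- a complete proof would have to carry it out explicitly; as written, your text is a correct and well-organised proof plan whose central lemma is still owed. Given that the paper itself delegates this to the literature, your proposal is at the same level of completeness as the paper, but it should not be mistaken for a self-contained proof.
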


The application of a transformation $T$ to $\cG$ can be seen as a transformation of the state-space variable $\hat{x}=T x$. The balancing transformation is computed as $T = \Sigma^{1/2} U R^{-1}$, where $R$ is a lower triangular matrix such that $P = R^\ast R$, while $\Sigma$ and $U$ are obtained from the singular value decomposition $R Q R^\ast = U\Sigma^2 U^*$. Note that we do not require $\sigma_i \ge \sigma_{i+1}$ for all $i =1,\dots,n-1$, which may be confusing in the standard formulation but is useful in the structured extension. The proof for the error bounds still holds in this case.

The algorithm to compute $\hat \cG_1^k$ is due to Moore~\cite{Moore1981} and is usually called \emph{balanced truncation}, the algorithm to compute $\hat \cG_2^k$ is due to Liu and Anderson~\cite{liu1989singular} and is called \emph{balanced singular perturbation}. The balanced singular perturbation approach matches the full order system at the zero frequency point, while the balanced truncation matches the full system in the frequency equal to infinity. Note also that balanced singular perturbation can be derived as the balanced truncation of a system with $\widetilde A = A^{-1}$, $\widetilde B = A^{-1} B$, $\widetilde C = C A^{-1}$, $\widetilde D = D - C A^{-1} B$. 
Finally, the balanced truncation is called projection-based since we can set $V = \begin{pmatrix} I_k & 0 \end{pmatrix} T$ and $W^\ast = \begin{pmatrix} I_k & 0 \end{pmatrix} (T^\ast)^{-1}$ such that the reduced order model $\cG^k$ has the realisation~\eqref{real:proj-based-mor}.

The balancing transformation $T$ is typically a full matrix, and hence any physical meaning in $x$ is not preserved in the new variables $\hat x = T x$. Additionally the sparsity of the drift matrix is lost under the transformation $TAT^{-1}$. In some cases, the state $x$ is naturally partitioned into two groups of states $x_1$, $x_2$ such that $x = \begin{pmatrix}x_1^\ast & x_2^\ast \end{pmatrix}^\ast$, as for example in closed loop systems with a controlled system having the states $x_1$ and a controller with the states $x_2$. In this case, it is not desirable for the transformation $T$ to mix $x_1$ and $x_2$ as to do so would destroy the controlled system-controller structure. Hence, $T$ has to be block-diagonal, and consequently so should the Gramians $P$ and $Q$. In order to find block-diagonal Gramians  we need to consider the so-called Lyapunov inequalities instead of equations:
\begin{subequations}\label{eq:lyap-ineq}
\begin{eqnarray}
A P +P A^\ast + B B^\ast &\prec&0,\\
A^\ast Q+ Q A + C^\ast C &\prec&0.
\end{eqnarray}   
\end{subequations}
The solutions $P$ and $Q$ to~\eqref{eq:lyap-ineq} are called \emph{generalised Gramians}. In what follows, we will consider generalised Gramians with a certain sparsity pattern $\cS$, and write $P\in\cS$, if $P$ has the sparsity pattern $\cS$. Note that these Gramians may not exist in general, but we discuss their existence in the next subsection. If $P = Q = \Sigma = \diag{\sigma_1, \dots, \sigma_n}$, then the realisation is called \emph{balanced in the generalised sense}, while $\sigma_i$'s \emph{generalised Hankel singular values}. The model reduction procedure is the same, and the error bounds are given in the form of generalised Hankel singular values. We refer to the reduction procedure, which consists of computing structured Gramians and projections as \emph{structured model reduction}. The following result generalises Proposition~\ref{prop:br} to structured model reduction. The error bounds for the structured balanced truncation are first shown in~\cite{Sandberg09}, however, we find our proof simpler and more intuitive. Moreover, to our best knowledge the proof for structured balanced singular perturbation is novel. 

\begin{theorem}\label{prop:gen_gram}  Consider a realisation $\cG$ in~\eqref{eq:original_real} and let there exist positive definite $P\in\cS$ and $Q\in\cS$ satisfying inequalities in~\eqref{eq:lyap-ineq}. Then the statement of Proposition~\ref{prop:br} holds, while the Hankel singular values and balancing are understood in the generalised sense.
\end{theorem}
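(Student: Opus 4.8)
The plan is to deduce the generalised (inequality) case from the standard (equality) case of Proposition~\ref{prop:br} by an input/output augmentation that turns the strict Lyapunov inequalities~\eqref{eq:lyap-ineq} into Lyapunov \emph{equations}, and then to transfer the resulting error bound back to $\cG$ through a submatrix-monotonicity argument for the $\Hinf$ norm. First I would make the slack in~\eqref{eq:lyap-ineq} explicit: since $-(A P + P A^\ast + B B^\ast)\succ 0$ and $-(A^\ast Q + Q A + C^\ast C)\succ 0$, there exist factors $R_c$, $R_o$ with $R_c R_c^\ast = -(A P + P A^\ast + B B^\ast)$ and $R_o^\ast R_o = -(A^\ast Q + Q A + C^\ast C)$. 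Setting $\tilde B = \begin{pmatrix} B & R_c \end{pmatrix}$ and $\tilde C = \begin{pmatrix} C^\ast & R_o^\ast \end{pmatrix}^\ast$, the \emph{same} $P$ and $Q$ now satisfy $A P + P A^\ast + \tilde B \tilde B^\ast = 0$ and $A^\ast Q + Q A + \tilde C^\ast \tilde C = 0$ exactly. Hence $P$ and $Q$ are the genuine controllability and observability Gramians of the augmented realisation $\tilde\cG = (A,\tilde B,\tilde C,\tilde D)$, where $\tilde D$ carries $D$ in its $(1,1)$ block and zeros elsewhere; they are positive definite and, crucially, still lie in $\cS$.

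Next I would apply Proposition~\ref{prop:br} verbatim to $\tilde\cG$. Because the balancing transformation $T$ is built solely from $P$ and $Q$, and these are block structured, $T$ may be taken block diagonal, the Hankel singular values of $\tilde\cG$ coincide with the generalised Hankel singular values $\sigma_i$ of $\cG$, and Proposition~\ref{prop:br} returns asymptotically stable, generalised-balanced reduced models $\tilde\cG_1^k$ (truncation) and $\tilde\cG_2^k$ (singular perturbation) with $\|\tilde\cG - \tilde\cG_j^k\|_{\Hinf} \le 2\sum_{i=k+1}^n \sigma_i$ for $j=1,2$.

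The step linking the augmented bound back to $\cG$ is the heart of the argument, and I expect it to be the main obstacle. I would observe that the original reduced model is a principal input/output submatrix of the augmented one: since $T$ is identical for both systems, the first $n_i$ columns of $T\tilde B$ are $T B$ and the first $n_o$ rows of $\tilde C T^{-1}$ are $C T^{-1}$. One must then check that channel selection \emph{commutes} with both reduction maps, for truncation this is immediate, and for singular perturbation it holds because the $A$-blocks (hence $\hat A_{12}\hat A_{22}^{-1}\hat A_{21}$) are common to $\cG$ and $\tilde\cG$, while the $(1,1)$ blocks of $\hat{\tilde B}_1-\hat A_{12}\hat A_{22}^{-1}\hat{\tilde B}_2$, of the corresponding $C$-term, and of $\tilde D-\hat{\tilde C}_2\hat A_{22}^{-1}\hat{\tilde B}_2$ reproduce exactly the original singular-perturbation formulas of Proposition~\ref{prop:br}. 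Consequently $\cG - \hat\cG_j^k$ is, at every frequency, a submatrix of $\tilde\cG - \tilde\cG_j^k$; since the largest singular value cannot increase under deletion of rows or columns, $\bar\sigma[(\cG-\hat\cG_j^k)(j\omega)]\le \bar\sigma[(\tilde\cG-\tilde\cG_j^k)(j\omega)]$ pointwise, whence $\|\cG - \hat\cG_j^k\|_{\Hinf} \le 2\sum_{i=k+1}^n\sigma_i$. The care needed here is to apply the submatrix monotonicity to the \emph{difference} system rather than to the models individually.

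As an alternative route to the singular-perturbation bound (and a useful consistency check) I would invoke the reciprocal-system identity recalled after Proposition~\ref{prop:br}: with $(\widetilde A,\widetilde B,\widetilde C,\widetilde D) = (A^{-1}, A^{-1}B, C A^{-1}, D - C A^{-1}B)$ the balanced truncation of the reciprocal system equals the balanced singular perturbation of $\cG$. A congruence by $A^{-1}$ gives $\widetilde A P + P \widetilde A^\ast + \widetilde B \widetilde B^\ast = A^{-1}(A P + P A^\ast + B B^\ast)A^{-\ast}\prec 0$, and likewise on the observability side, so $P,Q\in\cS$ remain generalised Gramians of the reciprocal system. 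Applying the truncation bound already established, together with the invariance of the $\Hinf$ norm under $s\mapsto 1/s$ (the imaginary axis maps to itself), again yields $\|\cG - \hat\cG_2^k\|_{\Hinf}\le 2\sum_{i=k+1}^n\sigma_i$.
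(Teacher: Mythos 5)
Your proposal is correct and follows essentially the same route as the paper: factor the Lyapunov slack into auxiliary input/output channels $B_e$, $C_e$ (your $R_c$, $R_o$), apply Proposition~\ref{prop:br} to the augmented system for which $P,Q$ are exact Gramians, and transfer the bound back via the fact that $\cG-\hat\cG_j^k$ is the upper-left block of the augmented error system, so its $\Hinf$ norm can only be smaller. Your explicit check that channel selection commutes with the singular-perturbation formulas, and the alternative reciprocal-system argument, are consistent refinements of the same idea rather than a different proof.
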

\begin{proof} 
(i) \emph{Error bounds for structured balanced truncation.} Let there exist $P\in\cS$ and $Q\in\cS$ satisfying inequalities~\eqref{eq:lyap-ineq}. Then the following equations hold for some positive definite matrices $X$ and $Y$:
\begin{subequations}\label{eq:lyap-ineq-eq}
\begin{eqnarray}
A P +P A^\ast + B B^\ast + X &=&0,\\
A^\ast Q + Q A + C^\ast C + Y &=&0.
\end{eqnarray}   
\end{subequations}
Due to positive definitiveness of $X$ and $Y$, there exist such $B_e$ that $X = B_e B_e^\ast$, and such $C_e$ that $Y = C_e^\ast C_e$. Hence we can treat $B_e$ as another input matrix, and $C_e$ as another output matrix for some extended transfer function $\cG_e$
\begin{equation*}
\cG_e = \left[\begin{array}{c|c c}  A &  B &  B_e \\ \hline\\[-9pt]
 C & D & 0  \\
 C_e & 0 & 0 \end{array} \right],
\end{equation*}
which we can balance with respect to Gramians $P$, $Q$ from~\eqref{eq:lyap-ineq-eq} and partition as follows
\begin{equation*}
\hat{\cG}_e = \left[\begin{array}{c c|c c} \hat A_{1 1} & \hat A_{1 2} & \hat B_1 & \hat B_{e1} \\ 
                                   \hat A_{2 1} & \hat A_{2 2} & \hat B_2 & \hat B_{e2} \\ 
\hline\\[-9pt]
\hat C_1 & \hat C_2 & D & 0  \\
\hat C_{e1} & \hat C_{e2} & 0 & 0  \end{array} \right].
\end{equation*}
Consider the reduced order model $\hat \cG_{e}^k$
\begin{gather*}
\hat \cG_{e1}^k =  \left[\begin{array}{c|c c} \hat A_{1 1} & \hat B_1 & \hat B_{e 1} \\ \hline\\[-9pt] 
                                          \hat C_1 & D & 0\\
                                          \hat C_{e 1} & 0 & 0\end{array} \right], 
\end{gather*}
which according to Proposition~\ref{prop:br} is asymptotically stable, balanced, and fulfils the following error bound:
\begin{gather*}
 \|\cG_{e} - \hat \cG_{e1}^k\|_{\Hinf} \le 2\sum\limits_{i = k+1}^n \sigma_{i}.
\end{gather*}
Finally note that the upper left corner of the transfer function $\cG_e(s)$ (respectively, $\hat \cG_{e1}^k$) is equal to $\cG(s)$ (respectively, $\hat \cG_{1}^k$). This implies that
\begin{gather*}
\|\cG - \hat \cG_{1}^k\|_{\Hinf} \le \|\cG_{e} - \hat \cG_{e1}^k\|_{\Hinf}\le 2\sum\limits_{i = k+1}^n \sigma_{i}
\end{gather*}
and proves the claim. 

(ii) \emph{Error bounds for  balanced singular perturbation.} We repeat the derivation above and obtain the reduced model $\hat \cG_{e2}^k$
\begin{gather*}
\hat \cG_{e2}^k =  
\left[\begin{array}{c|cc} \hat A_{1 1} - \hat A_{1 2} \hat A_{2 2}^{-1} \hat A_{2 1} & \hat B_1 -\hat A_{1 2}\hat A_{1 2}^{-1} \hat B_{2}  & \tilde B_2  \\ \hline\\[-9pt] 
\hat C_1 -\hat C_{2}\hat A_{2 2}^{-1}\hat A_{2 1} & D -\hat C_2 \hat A_{2 2}^{-1} \hat B_2 & \tilde D_{1 2}\\
\hat C_{e1} -\hat C_{e2}\hat A_{2 2}^{-1}\hat A_{2 1} & -\hat C_{e 2} \hat A_{2 2}^{-1} \hat B_2 & \tilde D_{2 2}\end{array} \right], 
\end{gather*}
where $\tilde B_2 =\hat B_{e1} -\hat A_{1 2}\hat A_{1 2}^{-1} \hat B_{e2}$, $\tilde D_{1 2} = -\hat C_2 \hat A_{2 2}^{-1} \hat B_{e 2}$, $\tilde D_{2 2} = -\hat C_{e 2} \hat A_{2 2}^{-1} \hat B_{e 2}$. According to Proposition~\ref{prop:br} $\cG^k_2$ is asymptotically stable, balanced, and fulfils the following bound:
\begin{gather*}
 \|\cG_{e} - \hat \cG_{e2}^k\|_{\Hinf} \le 2\sum\limits_{i = k+1}^n \sigma_{i}.
\end{gather*}
Again the upper left corner of the transfer function $\cG_e(s)$ (respectively, $\hat \cG_{e2}^k$) is equal to $\cG(s)$ (respectively, $\hat \cG_{2}^k$). Hence: 
\begin{gather*}
\|\cG - \hat \cG_{2}^k\|_{\Hinf} \le \|\cG_{e} - \hat \cG_{e2}^k\|_{\Hinf}\le 2\sum\limits_{i = k+1}^n \sigma_{i}.
\end{gather*}
which completes the proof.
\end{proof}

Since there are infinitely many solutions to the matrix inequalities~\eqref{eq:lyap-ineq}, there are infinitely many combinations of the generalised Hankel singular values $\{\sigma_1, \dots, \sigma_n\}$. We want, however, to find such $P$ and $Q$ that the smallest $\sigma_k$ are close to zero in order to achieve minimal reduction error. In this case, the following heuristic is usually proposed.
\begin{gather} \label{lyap-ineq-smor}
\begin{aligned}
\min\limits_{P\in \cS, P \succ 0}~~~&\tr(P) \\
\text{subject to: } &A P + P A^\ast + B B^\ast \prec 0,
\end{aligned}
\end{gather}
where $\cS$ is a sparsity constraint on $P$, e.g., $P$ is block-diagonal. The programme to compute $Q$ is derived in a similar manner. The trace minimisation here acts as a rank minimisation programme on $P$ and $Q$, thus minimising the smallest generalised Hankel singular values.

\subsection{Algebraic Conditions for Existence of Diagonal Generalised Gramians}\label{ss:exist}
As described above the block-diagonal transformation $T$ exists, if there exist generalised Gramians with the same sparsity pattern. The question of existence of such Gramians is studied in, for example,~\cite{sootla2016existence, andersondecentralised} and the references therein. 
In preparation of this manuscript we have noticed that many biochemical networks, when linearised around a steady-state have diagonal Gramians. Therefore we focus on \emph{diagonally stable} drift matrices $A$, meaning that there exists a positive definite, diagonal $P$ such that $A P + P A^\ast$ is negative definite. If the drift matrix $A$ in the realisation $\cG$ is diagonally stable, then we can find diagonal \emph{generalised Gramians} as shown in~\cite{sootla2014projectionI}. There exist no easy parametrisations of the class of diagonally stable matrices. However, there are some sufficient conditions for diagonal stability. For example in~\cite{arcak2011diagonal}, it was shown that for the so-called \emph{cacti graphs} there exist necessary and sufficient conditions for diagonal stability. We will not formally define these graphs, but just mention that these graphs rule out sparsity patterns generated by reversible reactions. Our results build upon those presented in~\cite{hershkowitz1985lyapunov} that certify existence of diagonal Lyapunov functions for a broad class of graphs which seem to appear frequently in biochemical reaction networks. To proceed we require a few definitions:
\begin{defn}
A matrix $A=\{a_{i j}\}_{i,j = 1}^n \in\R^{n\times n}$ is called Metzler, if it has nonnegative off-diagonal elements, that is $a_{i j} \ge 0$ for $i\ne j$.
\end{defn}
\begin{defn}
A matrix $\cM(A) = \{ m_{i j}\}_{i, j = 1}^n$ is called a companion matrix of $A = \{a_{i j}\}_{i,j = 1}^n \in\R^{n\times n}$, if 
\begin{equation*}
m_{i j} = \left\{\begin{array}{ll} |a_{ i j}| &  i = j \\
- |a_{i j}| & i \ne j
\end{array}\right.
\end{equation*}
\end{defn}
\begin{defn}
A matrix $A \in\R^{n\times n}$ is called an $\mH$ matrix if $\cM(A)$ has all eigenvalues with a nonnegative real part. If additionally $a_{ i i} >0$ for all $i$ we say that $A$ is an $\mH_+$ matrix.
\end{defn}
\begin{defn}
A matrix $A \in\R^{n\times n}$ is called strictly row scaled diagonally dominant if there exist positive $d_1, \dots, d_n$ such that for all $i=1,\dots,n$ we have
\[
d_i |a_{i i}| > \sum\limits_{j \ne i} d_j |a_{i j}|.
\]
The matrix $A$ is called strictly column scaled diagonally dominant, if $A^\ast$ is strictly row scaled diagonally dominant. If we can choose $d_i = 1$ for all $i$, then the matrix $A$ is called strictly row diagonally dominant, and we write $A\in \cDD$. If additionally $a_{i i} >0$ for all $i$, then we write $A\in \cDDp$.
\end{defn}

It can be shown that the class of $\mH$ matrices contains stable Metzler and stable triangular matrices; additionally, nonsingular matrices are $\mHp$ matrices if and only if they are strictly row and column scaled diagonally dominant~\cite{varga1976recurring}. Moreover, symmetric $\mHp$ matrices are positive definite, which can be shown by virtue of \emph{Gershgorin circle theorem}; however the reverse implication does generally hold. 

\begin{prop}[\cite{hershkowitz1985lyapunov}] \label{prop:h-diag-stab}
Let $-A$ be an $\mHp$ matrix. Then $A$ is diagonally stable if and only if $A$ is nonsingular.
\end{prop}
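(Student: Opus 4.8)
The plan is to prove the two implications separately. Necessity is immediate: if there is a diagonal $P\succ0$ with $AP+PA^\ast\prec0$, then $P$ is a Lyapunov certificate for $A$, so $A$ is Hurwitz and in particular has no zero eigenvalue, hence is nonsingular (the $\mHp$ hypothesis is not even needed here). For sufficiency I would induct on the dimension $n$ and first dispose of the reducible case. If $A$ is reducible, a permutation (which preserves diagonal stability, since it only permutes the diagonal $P$) brings it to block-upper-triangular form with diagonal blocks $A_{11},A_{22}$. Each $-A_{ii}$ is again $\mHp$, because the comparison matrix of a principal submatrix is the corresponding principal submatrix of $\cM(A)$ and principal submatrices of M-matrices are M-matrices; and each $A_{ii}$ is nonsingular because $A$ is. The induction hypothesis makes the blocks diagonally stable via $P_1,P_2$, and a Schur-complement argument with $P=\diag{P_1,\delta P_2}$ for small $\delta>0$ certifies the whole matrix. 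Thus it suffices to treat $A$ irreducible.

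For irreducible $A$ the clean case is when $\cM(A)=\cM(-A)$ is a \emph{nonsingular} M-matrix. Then $\cM(A)$ is itself diagonally stable, so there is a diagonal $D\succ0$ with $\cM(A)D+D\cM(A)^\ast\succ0$; this is a symmetric positive-definite matrix with nonpositive off-diagonal entries. Writing $Y:=-(AD+DA^\ast)$, I would observe that $Y$ has exactly the same diagonal as $\cM(A)D+D\cM(A)^\ast$ and off-diagonal entries no larger in modulus, so its comparison matrix $\cM(Y)$ dominates $\cM(A)D+D\cM(A)^\ast$ entrywise. By the standard monotonicity of M-matrices, $\cM(Y)$ is then a nonsingular M-matrix, i.e. $Y$ is a symmetric $\mHp$ matrix, hence positive definite by the Gershgorin fact recorded just before the proposition. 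Therefore $AD+DA^\ast\prec0$ and $A$ is diagonally stable. This is precisely the step that shows the off-diagonal \emph{signs} of $A$ are harmless relative to $\cM(A)$ once the comparison matrix is nonsingular.

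The remaining case is $A$ irreducible with $\cM(A)$ a \emph{singular} M-matrix. Here Perron--Frobenius supplies $d\gg0$ with $\cM(A)d=0$. Reading this as a scaled Gershgorin bound on $D^{-1}AD$, $D=\diag{d_1,\dots,d_n}$, places every eigenvalue of $A$ in a disk centred at $a_{ii}<0$ of radius $|a_{ii}|$; these disks touch the imaginary axis only at the origin, so nonsingularity of $A$ already promotes marginal stability to $A$ being Hurwitz. The natural candidate certificate is $P=\diag{d_1,\dots,d_n}$ itself. Expanding $v^\ast(AP+PA^\ast)v$ and substituting $a_{ii}d_i=-\sum_{j\ne i}|a_{ij}|d_j$ reduces the claim to showing this quadratic form is negative semidefinite with trivial kernel, the kernel being forced to be trivial exactly by nonsingularity of $A$. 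Low-dimensional and cyclic checks confirm the mechanism: when $\cM(A)$ is singular, $A$ is nonsingular precisely when the off-diagonal signs produce the cancellations that make $AP+PA^\ast$ definite rather than merely semidefinite.

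I expect this last case to be the main obstacle. Unlike the nonsingular-comparison case, $v^\ast(AP+PA^\ast)v$ does \emph{not} split into sign-definite pairwise terms, so semidefiniteness is a global property and strictness cannot be read off term by term; the crux is to connect the kernel of the symmetric matrix $AP+PA^\ast$ to $\ker A$ so that nonsingularity of $A$ delivers a definite certificate. If the explicit Perron certificate proves awkward to control in full generality, the safe fallback is the perturbation $A_\varepsilon=A-\varepsilon I$: its comparison matrix $\cM(A)+\varepsilon I$ is a nonsingular M-matrix, so $A_\varepsilon$ is diagonally stable by the previous paragraph; normalising the certificates $P_\varepsilon$ and passing to a limit $P^\ast\succeq0$ yields $AP^\ast+P^\ast A^\ast\preceq0$, and any zero block of $P^\ast$ would force a block-triangular structure on $A$ contradicting irreducibility, which is what pins the certificate down. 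Either route, it is the interlocking of irreducibility, singularity of $\cM(A)$ and nonsingularity of $A$ that carries the proof.
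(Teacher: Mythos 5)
You are proving a result the paper itself does not prove: Proposition~\ref{prop:h-diag-stab} is imported verbatim from Hershkowitz--Schneider~\cite{hershkowitz1985lyapunov}, so there is no in-paper argument to compare against; the paper only proves the related Theorem~\ref{thm:h-mat-lyap}, whose proof leans on Proposition~\ref{prop:h-diag-stab} rather than re-deriving it. Judged on its own terms, your attempt gets the easy parts right: necessity is exactly the standard Lyapunov argument; the reduction to the irreducible case via a permutation and $P=\diag{P_1,\delta P_2}$ with a Schur complement is sound; and the case where $\cM(A)$ is a \emph{nonsingular} M-matrix is handled correctly and cleanly (the comparison $\cM\bigl(-(AD+DA^\ast)\bigr)\ge \cM(A)D+D\cM(A)^\ast$ entrywise, with equal diagonals, plus M-matrix monotonicity and the symmetric-$\mHp$-implies-positive-definite fact, does close that case).

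The genuine gap is the irreducible case with $\cM(A)$ a \emph{singular} M-matrix, which is the entire content of the theorem, and neither of your two routes closes it. The main route ends with ``low-dimensional and cyclic checks confirm the mechanism,'' which is not an argument: with $P=\diag{d}$, $\cM(A)d=0$, the matrix $-(AP+PA^\ast)$ has comparison matrix dominating a \emph{singular} PSD M-matrix with equality on the diagonal, so positive \emph{semi}definiteness comes for free, but strict definiteness holds only when sign cancellations $|a_{ij}d_j+a_{ji}d_i|<|a_{ij}|d_j+|a_{ji}|d_i$ actually occur, and tying the absence of such cancellations to the singularity of $A$ (essentially a diagonal/signature similarity between $A$ and $-\cM(A)$) is precisely the theorem's content --- you name this as the crux but do not supply it. The fallback does not rescue you: passing $A_\varepsilon=A-\varepsilon I$ to the limit gives a diagonal $P^\ast$ with $AP^\ast+P^\ast A^\ast\preceq 0$, and your irreducibility argument does correctly force $P^\ast\succ 0$ (a zero diagonal entry of $P^\ast$ kills a whole row of the semidefinite matrix and produces a forbidden zero block of $A$), but the conclusion is only Lyapunov diagonal \emph{semi}stability. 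Nothing in the limit argument upgrades $\preceq 0$ to $\prec 0$, and nonsingularity of $A$ is never invoked at this point, so the fallback would ``prove'' the same conclusion for singular $A$, where it is false. As written, the proof is incomplete exactly where the result is hard.
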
 
 
Here, we provide a sharper version of this result, which was discussed  in~\cite{sootla2016existence}. We present the proof for completeness.

\begin{theorem} \label{thm:h-mat-lyap}
Let $-A$ be an $\mHp$ matrix with a nonsingular $\cM(A)$. Then the following conditions hold 

(i) there exist positive vectors $v = \begin{pmatrix} v_1 & \dots & v_n \end{pmatrix}^\ast$, $w = \begin{pmatrix} w_1 & \dots & w_n \end{pmatrix}^\ast$ such that $\cM(A) v$, $w^\ast \cM(A)$ are also positive. 

(ii) there exists a diagonal $X$ such that $- (A X + X A^\ast)$ is an $\mHp$ matrix. Moreover, we can choose it as $X = P_w P_v^{-1}$, where $P_v =\diag{v_1, \dots, v_n}$, $P_w =\diag{w_1, \dots, w_n}$.
\end{theorem}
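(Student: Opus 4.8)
The plan is to read the hypothesis through the lens of M-matrix theory. Since the companion matrix ignores signs, $\cM(A)=\cM(-A)$, so the assumption that $-A$ is an $\mHp$ matrix with $\cM(A)$ nonsingular says exactly that $\cM(A)$ is a \emph{nonsingular M-matrix}: by the definition of the companion matrix it is a $Z$-matrix (nonpositive off-diagonal entries) whose eigenvalues have positive real part, and $a_{ii}<0$ for every $i$. For part~(i) I would invoke the standard equivalence (cf.~\cite{varga1976recurring}) between nonsingular M-matrices and strict scaled diagonal dominance: a nonsingular M-matrix $\cM(A)$ admits a positive vector $v$ with $\cM(A)v>0$. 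Applying the same fact to $\cM(A)^\ast=\cM(A^\ast)$, which is again a nonsingular M-matrix, produces a positive vector $w$ with $\cM(A)^\ast w>0$, i.e. $w^\ast\cM(A)>0$. This is precisely~(i).

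For part~(ii) I would set $X=P_wP_v^{-1}=\diag{w_1/v_1,\dots,w_n/v_n}\succ0$ and write $M:=-(AX+XA^\ast)$. Because $X$ is diagonal, $M$ is symmetric, and its diagonal entries are $M_{ii}=-2a_{ii}x_i=2|a_{ii}|x_i>0$ (using $a_{ii}<0$); this already secures the positive-diagonal half of the $\mHp$ property. For the spectral half I would compare companion matrices entrywise: for $i\ne j$,
\[
\cM(M)_{ij}=-|a_{ij}x_j+a_{ji}x_i|\ \ge\ -\bigl(|a_{ij}|x_j+|a_{ji}|x_i\bigr)=\bigl(\cM(A)X+X\cM(A)^\ast\bigr)_{ij},
\]
while the diagonals of the two matrices coincide. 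Hence $\cM(M)\ge N:=\cM(A)X+X\cM(A)^\ast$ entrywise, and both are $Z$-matrices. By the monotonicity (comparison) property of M-matrices — a $Z$-matrix that dominates a nonsingular M-matrix entrywise is itself a nonsingular M-matrix — it suffices to prove that $N$ is a nonsingular M-matrix; equivalently, since $N$ is a symmetric $Z$-matrix, that $N\succ0$. This then makes $\cM(M)$ an M-matrix and yields $M\in\mHp$.

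The heart of the argument, and the step I expect to be the main obstacle, is therefore the positive definiteness of $N=\cM(A)X+X\cM(A)^\ast$. This is an explicit diagonal Lyapunov combination for the M-matrix $\cM(A)$, so it is an instance of diagonal stability of M-matrices — the constructive sharpening of Proposition~\ref{prop:h-diag-stab} — with the scaling $X=P_wP_v^{-1}$ tying the Lyapunov weights to the two vectors from~(i). The difficulty is that $N$ need not be diagonally dominant, so Gershgorin-type estimates (splitting $|a_{ij}x_j+a_{ji}x_i|$ by the triangle inequality and matching the halves against $\cM(A)v>0$ and $\cM(A)^\ast w>0$ separately) are too lossy and fail in general; the proof must instead exploit the \emph{global} structure of $\cM(A)$ (for instance $\cM(A)^{-1}\ge0$) and a careful, compatible choice of $v$ and $w$.

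Concretely, the route I would pursue is a positive congruence: $P_vNP_v=S+S^\ast$ with $S=P_v\cM(A)P_w$, which reduces $N\succ0$ to the accretivity of $S$. Writing $\cM(A)=\diag{|a_{ii}|}-E$ with $E\ge0$, the symmetric part of $S$ is diagonally dominant as soon as $\cM(A)w\ge0$ and $\cM(A)^\ast v\ge0$ hold, i.e. when $v$ and $w$ can be taken in the common cone $\{u>0:\cM(A)u\ge0,\ \cM(A)^\ast u\ge0\}$; in that regime the defining inequalities on $v,w$ close the estimate immediately. The delicate point — and the crux on which the whole construction turns — is thus securing a compatible pair $(v,w)$ from part~(i); outside the favourable regime one cannot rely on dominance and must argue $N\succ0$ directly from the M-matrix (e.g. inverse-nonnegativity) structure of $\cM(A)$.
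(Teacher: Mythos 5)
Your part (i) and the first half of your part (ii) track the paper's own argument: identifying $\cM(A)$ as a nonsingular M-matrix to get the vectors $v,w$, and reducing the claim to positive definiteness of the symmetric Z-matrix $N=\cM(A)X+X\cM(A)^\ast$ via the entrywise comparison $\cM\bigl(-(AX+XA^\ast)\bigr)\ge N$ with equal diagonals. The genuine gap is that you never prove $N\succ0$: you declare it ``the main obstacle,'' assert that the dominance estimate is ``too lossy and fails in general,'' and close by deferring to an unspecified ``compatible pair $(v,w)$.'' But the estimate you dismiss is exact, not lossy, once the diagonal scaling is paired correctly. Take $X=P_vP_w^{-1}$ --- this is what the paper's proof actually uses, even though the theorem statement writes $P_wP_v^{-1}$ --- so that $Xw=v$. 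Then
\begin{equation*}
Nw=\cM(A)Xw+X\cM(A)^\ast w=\cM(A)v+X\cM(A)^\ast w\gg0,
\end{equation*}
with no triangle inequality involved at the level of $N$ (its off-diagonal entries are exactly $-|a_{ij}|x_j-|a_{ji}|x_i$); componentwise this reads $(Nw)_i=(\cM(A)v)_i+(v_i/w_i)(\cM(A)^\ast w)_i$, i.e.\ precisely the ``matching the halves against $\cM(A)v>0$ and $\cM(A)^\ast w>0$'' that you rejected. A symmetric Z-matrix $N$ with $Nw\gg0$ for a positive $w$ is a nonsingular M-matrix (equivalently, $-N$ is symmetric Hurwitz Metzler), hence $N\succ0$, and your comparison step then finishes the proof.

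Your instinct that the computation does not close with $X=P_wP_v^{-1}$ is, in fact, correct: with that scaling the natural test quantities involve $\cM(A)w$ and $\cM(A)^\ast v$, which part (i) does not control, and one can build $2\times2$ examples (e.g.\ $\cM(A)=\bigl(\begin{smallmatrix}1&-0.9\\-0.01&1\end{smallmatrix}\bigr)$ with $v=(1,0.02)^\ast$, $w=(1,99)^\ast$) for which $-(AX+XA^\ast)$ is indefinite. So the formula in the statement appears to be a typo for $P_vP_w^{-1}$. The correct conclusion from your observation was not that a subtler global argument is needed, but that the scaling should be transposed so that $Xw=v$; as written, your proposal leaves the crux of part (ii) unestablished.
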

\begin{proof} 
i) By definition $-\cM(A)$ is a Metzler matrix with all eigenvalues $\lambda_i(\cM(A)) \le 0$, since $\cM(A)$ is nonsingular by the premise, $-\cM(A)$ is a Hurwitz Metzler matrix. Hence the claim follows by applying the results from~\cite{rantzer2012distributed}.

ii) According to Proposition~\ref{prop:h-diag-stab} there exists a matrix $X = \diag{ x_1,~\cdots~,x_n}\succ 0$ such that $- AX - X A^\ast \succ 0$. Note that $a_{i i} < 0$ for all $i$, let
\begin{gather*}
(-A X - X A^\ast)_{i j} = - a_{i j} x_j - a_{j i} x_i \\
(\cM(A) X + X \cM(A^\ast))_{i j} = 
\begin{cases} - a_{i j} x_j - a_{j i} x_i  & i = j \\
- |a_{i j}| x_j - |a_{j i}| x_i            & i \ne j
\end{cases}
\end{gather*}

It is straightforward to show that $\cM(A) X + X \cM(A^\ast) \le \cM(- A X - X A^\ast)$, moreover the elements on the diagonal are equal. This means that we can write $\cM(A) X + X \cM(A^\ast) = s I - R_1$, $\cM(- A X - X A^\ast) = s I -R_2$, where the matrices $R_1$ and $R_2$ satisfy $R_1\ge R_2 \ge 0$. According to Weilandt's theorem  $\rho(R_1) \ge \rho(R_2)$ (cf.~\cite{hershkowitz1985lyapunov}). Therefore the minimal eigenvalue of $\cM(A) X + X \cM(A^\ast)$ is smaller or equal to the minimal eigenvalue of $\cM(- A X - X A^\ast)$. This implies that $\cM(- A X - X A^\ast)$ has eigenvalues with positive real part, hence $- A X - X A^\ast$ is an $\mHp$ matrix.

The proof of the second part of the statement is inspired by the proof of Proposition~1 in~\cite{rantzer2012distributed}. Let $X = P_v P_w^{-1}$, then 
\begin{multline*}
(\cM(A) X + X \cM(A^\ast)) w = (\cM(A) v + X  \cM(A^\ast) w ) \gg 0,
\end{multline*}
where the inequality follows since $\cM(A) v$ and $\cM(A^\ast) w$ are positive and $X$ is nonnegative. Hence $S = -\cM(A) X - X \cM(A^\ast)$ is a Metzler matrix and there exists a positive vector $w$ such that $S w$ is negative. This implies that $S$ is a symmetric Hurwitz and Metzler matrix (cf.~\cite{rantzer2012distributed}), which means that $-S$ is positive definite. According to the derivations above $\cM(- A X - X A^\ast) \ge -S$, hence $- A X - X A^\ast$ is an $\mHp$ matrix and consequently it is positive definite.
\end{proof}

If $A$ is additionally an irreducible $\mH$ matrix, then we can use eigenvectors of $\cM(A)$ corresponding to the eigenvalue with the smallest real part as the vectors $v$, $w$. Hence a diagonal Lyapunov function can also be computed using linear algebra as opposed to solving the LMIs~\eqref{eq:lyap-ineq}. 

If an $\mH$ matrix $A$ is ill-conditioned, even although we can guarantee existence of diagonal solutions $P$, $Q$ to~\eqref{lyap-ineq-smor}, in practice, standard solvers frequently fail to provide even a feasible point. This, for example, happens in the yeast glycolysis example in Section \ref{ex:gly}. However, since $A$ is an $\mH$ matrix, we can compute diagonal Lyapunov matrices using linear algebra as shown in Theorem~\ref{thm:h-mat-lyap}. Let $X$ be such that $A X + X A^\ast$ is negative definite. Then we can choose $P_{\rm base}$ satisfying Lyapunov inequality as $P_{\rm base} = X \overline \sigma(B B^\ast)/\underline\sigma(A X + X A^\ast)$, where $\overline\sigma(\cdot)$, and $\underline\sigma(\cdot)$ stand for the maximum and minimum singular values of a matrix, respectively. Similarly, we can compute $Q_{\rm base}$. Now in order to compute a generalised controllability Gramians we solve instead: 
\begin{gather} \label{eq:lyap-ineq-cond}
\begin{aligned}
\min\limits_{P\in \cS}~~~&\tr(P) \\
\text{s. t.: } &A (P+P_{\rm base}) + (P+P_{\rm base}) A^\ast + B B^\ast \preceq 0 \\
 &P+P_{\rm base} \succ 0  
\end{aligned}
\end{gather}
where the initial point is simply $P=0$, which improves numerical properties of the programme. A similar procedure can be derived for $Q$.  

\section{Projection Based Model Reduction of a Linear Noise Approximation\label{s:proj-lna}}
\subsection{A Reduced-Order Model\label{ss:rom}}
In this section we describe how structured \emph{projection} based methods can be applied to model reduction of LNA. By order of the LNA we mean the dimension of the vector $x+\eta$.  We assume we want to preserve the physical interpretation of first $k$ states (chemical species) out of $n$ and that  the full order model takes the following form:
\begin{gather}
\begin{aligned}
  & \dot x = g(x), \\ 
  & \dot \eta = A(x)\eta + B(x) \dot w,  \\
  & y = C (x +\eta) \\
  & x(0) = x_0, \quad \eta(0) = 0, & 
\end{aligned} \label{eq:full-order}
\end{gather}
where $y$ is an artificially introduced ``output'' of our Gaussian process with $C = \begin{pmatrix}I_k & 0_{k,n-k}\end{pmatrix}$. Note that other choices of $C$ are perfectly valid, in particular replacing the identity matrix with an arbitrary full matrix may be desirable. In this setting standard model reduction techniques aim to synthesise a model that approximates~\eqref{eq:full-order} in the input-output sense i.e. the map from $\dot w $ to $y$ but with fewer states. In this work we show how the physical structure of the first $k$ states can be preserved whilst the remaining $n-k$ states form a subsystem which is then reduced to order $r$ where $r<n-k$ but with no physical interpretation.

First we linearise the process~\eqref{eq:full-order} $\eta$ around a steady-state $x_{ss}$ of the mean dynamics and obtain the following SDE:
\begin{gather}\label{model:sde-lti}
\begin{aligned}
\dot \xi &= A \xi + B \dot w \\
y &= C \xi,
\end{aligned}
\end{gather}
where $A = A(x_{ss})$, $B =B(x_{ss})$. Let $\cG$ denote the realisation of the stochastic system \eqref{model:sde-lti}.
Our goal is to choose the transformation $T: (A,B,C)\rightarrow (TAT^{-1},TB, CT^{-1})$ such that by applying averaging to the transformed model, we obtain a reduced order SDE $\cG^{k+r}$, which is stable, while $\|\cG - \hat \cG^{k+r}\|$ is minimised in some norm. As a suitable criterion, we consider the standard $\Htwo$ and $\Hinf$ norms. The $\Htwo$ norm has the interpretation of the integral of the trace of the covariance matrix of the process $y(t)$. The $\Hinf$ norm is maximum over frequencies of the largest eigenvalue values of the spectral density of the process $y(t)$~\cite{aastrom2012introduction}. In order to preserve the structure of the first $k$ state equations we will introduce a structured transformation $T= \diag{I_k, T_2}$.

Once the transformation matrix $T$ has been constructed, we obtain the new states $z = T x$, $\xi = T \eta$,  vector field $\tilde g(z) = T g(T^{-1} x)$, and matrices $\tilde A(z) = T A(T^{-1} z)$, $\tilde B(z) = T B(T^{-1} z)$, and consider the following full order model:
\begin{subequations}\label{transformed-model}
\begin{eqnarray}
\dot z &=& \tilde g(z) \label{transformed-model-z}\\
\dot \xi &=& \tilde A(z) \xi + \tilde B(z) \dot w,\label{transformed-model-xi}\\
y &=& C (z+ \xi)
\end{eqnarray}   
\end{subequations}
Given the system~\eqref{transformed-model}, which is equivalent to the system~\eqref{eq:full-order}, a reduced order model can then be found by applying the averaging method for the LNA~\cite{thomas2012rigorous}. In our examples, we use this averaging method as a theoretical justification for our algorithm, however, in practice we do not compute the Lipschitz constants or verify the separation of time-scales. In this sense our approach should be seen as a heuristic. 

\subsection{Computation of Structured Transformations}\label{ss:comp_struct}
Consider an SDE~\eqref{model:sde-lti} and fix the order of reduced order model to be equal to $k+r$. We will use the structured balanced singular perturbation approach, which results in the following reduced order realisation:
\begin{gather}\label{real:lna-lin-red}
\hat \cG^{k+r} =  \left[\begin{array}{c|c} V (I_{k+r} - A \tilde A) A W &  V (I_{k+r} - A \tilde A) B   \\ \hline\\[-9pt] C(I_{k+r} -  \tilde A A) W   & 0 \end{array} \right], 
\end{gather}
where $\tilde A = W_r (V_r A W_r)^{-1} V_r$ and
\begin{gather}\label{eq:projections}
\begin{aligned}
V &= \diag{I_{k+r}, 0_{k+r,n-k-r}} T, \\
V_r &= \diag{0_{n-k-r,k+r}, I_{n-k-r}} T, \\
W^\ast &= \diag{I_{k+r}, 0_{k+r,n-k-r}} T^{-\ast}, \\
W_r^\ast &= \diag{0_{n-k-r,k+r}, I_{n-k-r}} T^{-\ast}.
\end{aligned}
\end{gather}
All is left is to compute a state-space transformation $T=\diag{I_k, T_2}$, if it exists. 

\textbf{\emph{$\Hinf$ balancing:}} Assume that the drift matrix $A$ is diagonally stable. In order to compute the projections consider the following semidefinite programmes 
\begin{subequations}\label{eq:lyap-struct}
\begin{eqnarray}
\begin{aligned}
\min\limits_{\Sigma_P, P_2}~~~&\tr(P) \\
\text{s. t.: } &A P + P A^\ast + B B^\ast \prec 0 \\
 &P  =\begin{pmatrix}
\Sigma_P & 0_{k,n-k} \\ 
 0_{n-k,k}  & P_2 
\end{pmatrix}\succ 0
\end{aligned}\label{eq:lyap-struct-p}\\
\begin{aligned}
\min\limits_{\Sigma_Q, Q_2}~~~&\tr(Q) \\
\text{s. t.: } &A^\ast Q + Q A + C^\ast C \prec 0 \\
 &Q =\begin{pmatrix}
\Sigma_Q & 0_{k,n-k} \\ 
 0_{n-k,k}  & Q_2 
\end{pmatrix}\succ 0,
\end{aligned}\label{eq:lyap-struct-q}
\end{eqnarray}   
\end{subequations}
where $\Sigma_P$, $\Sigma_Q$ are diagonal matrices. Now we are in the position to state the following result.
\begin{theorem}\label{thm:lmi} Let $A$ be diagonally stable and the matrices $P$ and $Q$ satisfy LMIs~\eqref{eq:lyap-struct-p}--\eqref{eq:lyap-struct-q}. Next, let $T$ be such that $T P T^\ast = T^{-1} Q (T^\ast)^{-1} = \Sigma$, let $\sigma_{i}$ denote the eigenvalues of $(P_2 Q_2)^{1/2}$ such that $\sigma_i \ge \sigma_{i+1}$, and let $\sigma_{r} > \sigma_{r+1}$ for some integer $r< n-k$. Consider the projections defined in~\eqref{eq:projections} and reduced order model $\hat \cG^{k+r}$ defined in~\eqref{real:lna-lin-red}. Then the system $\hat \cG^{k+r}$ is diagonally stable and 
\begin{gather*}
\|\cG - \hat \cG^{k+r}\|_{\Hinf} \le 2\sum\limits_{i= r+1}^{n-k} \sigma_{i}.
\end{gather*} \label{thm:main-hinf}
\end{theorem}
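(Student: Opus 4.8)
The plan is to derive the theorem as a specialisation of the structured balancing result Theorem~\ref{prop:gen_gram}, in its singular-perturbation form (part (ii)), to the sparsity pattern $\cS$ in which $P$ and $Q$ are block diagonal with a \emph{diagonal} leading $k\times k$ block. First I would check that the programmes~\eqref{eq:lyap-struct-p}--\eqref{eq:lyap-struct-q} are feasible: since $A$ is diagonally stable there exist fully diagonal generalised Gramians (cf.~\cite{sootla2014projectionI}), and any such diagonal $P$, $Q$ in particular has a diagonal leading block, so it is admissible. Hence positive definite $P = \diag{\Sigma_P, P_2}$ and $Q = \diag{\Sigma_Q, Q_2}$ exist and the hypotheses of Theorem~\ref{prop:gen_gram} hold with $\cS$ this block pattern.

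Next I would exploit the structure to balance. Because $P$ and $Q$ are block diagonal with diagonal leading blocks, I take $T = \diag{I_k, T_2}$ where $T_2$ is the ordinary balancing transformation of the \emph{reducible} subsystem, so that $T_2 P_2 T_2^\ast = T_2^{-1} Q_2 T_2^{-\ast} = \Sigma_2 := \diag{\sigma_1, \dots, \sigma_{n-k}}$ with the $\sigma_i$ the eigenvalues of $(P_2 Q_2)^{1/2}$ ordered decreasingly. The first $k$ coordinates are untouched, preserving the physical species, and the transformed Gramians become $\diag{\Sigma_P, \Sigma_2}$ and $\diag{\Sigma_Q, \Sigma_2}$. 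The states removed are the last $n-k-r$ coordinates, which lie entirely in the reducible subsystem; on this block the two transformed Gramians coincide and equal $\diag{\sigma_{r+1}, \dots, \sigma_{n-k}}$.

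I would then identify~\eqref{real:lna-lin-red} with the structured balanced singular perturbation $\hat\cG_2^{k+r}$ of Theorem~\ref{prop:gen_gram}(ii). Substituting~\eqref{eq:projections} into the oblique-projection expressions, using $VAW = \hat A_{11}$, $V_r A W_r = \hat A_{22}$ (invertible because $A$, hence $\hat A_{22}$, is Hurwitz) and $\tilde A = W_r(V_r A W_r)^{-1}V_r$, reproduces the Schur-complement formulas $\hat A_{11} - \hat A_{12}\hat A_{22}^{-1}\hat A_{21}$, $\hat B_1 - \hat A_{12}\hat A_{22}^{-1}\hat B_2$, $\hat C_1 - \hat C_2\hat A_{22}^{-1}\hat A_{21}$ with $D=0$. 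Theorem~\ref{prop:gen_gram}(ii) then yields asymptotic stability and a bound summed over the truncated generalised Hankel singular values, which are exactly $\sigma_{r+1}, \dots, \sigma_{n-k}$, giving $\|\cG - \hat\cG^{k+r}\|_{\Hinf} \le 2\sum_{i=r+1}^{n-k}\sigma_i$. For diagonal stability I would use that both truncation and singular perturbation preserve the leading block of the balanced Gramian, so the reduced generalised controllability Gramian is $\hat P = \diag{\Sigma_P, (\Sigma_2)_{1:r}}$, which is diagonal, positive definite, and, the reduced realisation being balanced in the generalised sense, satisfies $\hat A \hat P + \hat P \hat A^\ast \prec 0$; a diagonal positive definite Lyapunov solution is precisely a diagonal-stability certificate.

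I expect the main obstacle to be the mismatch between Proposition~\ref{prop:br}, which presumes a \emph{fully} balanced realisation ($TPT^\ast = T^{-1}Q(T^\ast)^{-1} = \Sigma$), and the structure-preserving $T = \diag{I_k, T_2}$, which cannot balance the leading block unless $\Sigma_P = \Sigma_Q$. The crux is to argue that this is harmless: the leading $k+r$ states are never truncated, so the discrepancy $\Sigma_P \ne \Sigma_Q$ on the preserved block never enters the error-bound computation, which only requires the \emph{removed} states to be balanced — and they are, with common diagonal Gramian $\diag{\sigma_{r+1}, \dots, \sigma_{n-k}}$. Making this rigorous, by restricting the extended-system argument of Theorem~\ref{prop:gen_gram} to the reducible block or by a block-wise version of the additive bound, is the step demanding the most care.
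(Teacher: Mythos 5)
Your proposal is correct and follows essentially the same route as the paper, which itself only states that the theorem is "a straightforward application" of Theorem~\ref{prop:gen_gram} and Lemma~\ref{lem:diag-stab}: you invoke the structured balanced singular perturbation bound for the $\Hinf$ estimate and the diagonality of the reduced generalised Gramian (the content of Lemma~\ref{lem:diag-stab}) for diagonal stability. Your closing observation — that $T=\diag{I_k,T_2}$ cannot equalise the leading blocks $\Sigma_P\ne\Sigma_Q$ and that this is harmless because only the truncated coordinates need to be balanced — is in fact more explicit than anything the paper says on this point, and is the right way to reconcile the structured transformation with the hypotheses of Proposition~\ref{prop:br}.
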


The proof of this theorem is a straightforward application of the results in Subsection~\ref{ss:smor} and Lemma \ref{lem:diag-stab} in Appendix~\ref{s:diag-stab-preserv}, hence we will not formally prove this result. If we are not interested in preserving diagonal stability, then we can relax the structure of the matrices $\Sigma_P$ and $\Sigma_Q$ to be full positive definite matrices. Additionally, if the system is not diagonally stable, but block-diagonal Gramians $P$, $Q$ exist (that is $\Sigma_P$ and $\Sigma_Q$ are full positive definite matrices), then Theorem~\ref{thm:main-hinf} still holds with $\hat \cG^{k+r}$ being a stable realisation.

\textbf{\emph{$\Htwo$ balancing:}} An arguably better way of measuring the norm of a stochastic process is the $\Htwo$ norm, defined as
\begin{equation*}
\|\cG\|_\Htwo^2 := 1/(2\pi) \int_{-\infty}^\infty \tr(\cG^\ast(j\omega) \cG(j\omega)) d\omega.
\end{equation*}
There are many methods for model reduction in the $\Htwo$ norm (cf.~\cite{gugercin:609}), however, none of them can easily be extended to the structured projection techniques. However, there exists a simple heuristic that balances just the generalised controllability Gramian, which is computed by the programme~\eqref{eq:lyap-struct-p}. We were not able to obtain any meaningful error bounds for this heuristic, but the computational results are satisfactory and are demonstrated in what follows. Again the proof is a straightforward application of the results in Subsection~\ref{ss:smor} and Appendix~\ref{s:diag-stab-preserv}

\begin{theorem} Let $A$ be diagonally stable. Let $T$ be such that $T P T^\ast = \Sigma$, let $\sigma_{i}$ denote the eigenvalues of $P_2$ such that $\sigma_i \ge \sigma_{i+1}$, and let $\sigma_{r} > \sigma_{r+1}$ for some integer $r< n-k$. Consider the projections defined in~\eqref{eq:projections} and reduced order model $\hat \cG^{k+r}$ defined in~\eqref{real:lna-lin-red}. Then the realisation $\hat \cG^{k+r}$ is diagonally stable. \label{thm:main-h2}
\end{theorem}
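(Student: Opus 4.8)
The plan is to reduce the claim to diagonal-stability preservation under balanced singular perturbation, exactly as in Theorem~\ref{thm:main-hinf}, but using only the controllability Gramian. First I would establish feasibility of~\eqref{eq:lyap-struct-p}: since $A$ is diagonally stable there is a diagonal $P \succ 0$ with $A P + P A^\ast \prec 0$, and partitioning $P = \diag{\Sigma_P, P_2}$ with both blocks diagonal exhibits an admissible point of the programme. Taking $T = \diag{I_k, T_2}$ with $T_2$ orthogonal and $T_2 P_2 T_2^\ast = \diag{\sigma_1,\dots,\sigma_{n-k}}$ the eigenvalues of $P_2$, we obtain $T P T^\ast = \Sigma$ diagonal. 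The key observation at this stage is that the transformed drift $\hat A = T A T^{-1}$ is itself diagonally stable with the \emph{diagonal} certificate $\Sigma$, since
\[
\hat A \Sigma + \Sigma \hat A^\ast = T (A P + P A^\ast) T^\ast \prec 0.
\]

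Next I would identify the reduced drift in~\eqref{real:lna-lin-red}. Writing the projectors~\eqref{eq:projections} as leading/trailing rows of $T$ and columns of $T^{-1}$, one checks that $V A W = \hat A_{11}$, $V A W_r = \hat A_{12}$, $V_r A W = \hat A_{21}$ and $V_r A W_r = \hat A_{22}$, where the $\hat A_{i j}$ are the blocks of $\hat A$ partitioned at index $k+r$. Substituting $\tilde A = W_r \hat A_{22}^{-1} V_r$ then yields
\[
V (I_{k+r} - A \tilde A) A W = \hat A_{11} - \hat A_{12} \hat A_{22}^{-1} \hat A_{21},
\]
the balanced singular-perturbation Schur complement; here $\hat A_{22}$ is invertible because it is the $(2,2)$ block of the negative definite matrix $\hat A \Sigma + \Sigma \hat A^\ast$, hence itself diagonally stable and Hurwitz. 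The gap condition $\sigma_r > \sigma_{r+1}$ serves only to make the partition at $k+r$ well defined.

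The crux is then to show this Schur complement is diagonally stable, which is the content of Lemma~\ref{lem:diag-stab} and which I would argue through the singular-perturbation/truncation duality noted after Proposition~\ref{prop:br}. Two elementary facts drive it: (a) diagonal stability is invariant under inversion with the \emph{same} certificate, since $A^{-1} P + P A^{-\ast} = A^{-1}(A P + P A^\ast) A^{-\ast} \prec 0$; and (b) the leading principal block $M_{11}$ of any $M$ with $M \Sigma + \Sigma M^\ast \prec 0$ and $\Sigma$ diagonal is diagonally stable with certificate $\Sigma_1$, because the $(1,1)$ block of a negative definite matrix is negative definite. Applying (a) to $\hat A$ gives $\hat A^{-1}$ diagonally stable with $\Sigma$; applying (b) gives $(\hat A^{-1})_{11}$ diagonally stable with $\Sigma_1$; and since $(\hat A^{-1})_{11} = (\hat A_{11} - \hat A_{12} \hat A_{22}^{-1} \hat A_{21})^{-1}$ by the block-inverse formula, a final application of (a) shows the Schur complement itself is diagonally stable.

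I expect the main obstacle to be the bookkeeping in this last chain — keeping track of which subblock of $\Sigma$ is the certificate at each step and verifying the block-inverse identity — rather than any deep difficulty. The only structural ingredient beyond ordinary balancing is that the certificate $\Sigma$ is diagonal, which is precisely why one-sided balancing of the controllability Gramian already suffices here, even though (unlike the $\Hinf$ case) no two-Gramian balancing and hence no error bound is available.
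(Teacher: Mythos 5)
Your proposal is correct and follows essentially the same route as the paper: the paper's proof is a direct application of Lemma~\ref{lem:diag-stab}, whose mechanism is exactly your chain of inverting $\hat A$, restricting the diagonal Lyapunov certificate $\Sigma$ to the leading principal block, and inverting back to recover the balanced singular-perturbation Schur complement. Your facts (a) and (b) are precisely what drive that lemma, so the argument matches the paper's in substance.
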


\subsection{On Existence of Structured Transformations}\label{ss:exist-bio}
According to the discussion in the previous section, the structured transformations always exist if the drift matrix $A$, which was found by linearising ~\eqref{transformed-model} about a stable steady state, is diagonally stable. Our numerical computations indicate that for many biochemical networks this condition holds, and in many examples the drift matrix $A$ is actually an $\mH$ matrix. We can only provide some possible reasons for this phenomenon. 

Firstly, some biochemical networks are monotone, which means that the Jacobian of the drift term is a Metzler matrix. Hence it is also an $\mH$ matrix around a stable equilibrium. 

Secondly, as noted in~\cite{sontag2007monotone}, many biochemical networks exhibit a nearly monotone behaviour, meaning that if some interactions are removed then the system becomes monotone. We make a conjecture that near monotonicity is related to having an $\mH$ drift matrix $A(x_{ss})$ in the linearised dynamics. We provide an example of a nearly monotone system, whose linearised dynamics have the $\mH$ drift matrix. This discussion, however, lies outside of the scope of this paper. 

Finally, $\mH$ matrices are closely related to scaled diagonally dominant matrices, which appear in graph theory. Since biochemical networks are often locally stable systems on sparse graphs, it should not be too surprising to find systems with $\mH$ drift matrices in the linearised dynamics.

\section{Examples\label{s:ex}}
\subsection{Comparison of the models\label{ss:err}}
We compare separately the error in the macroscopic dynamics (mean) and the fluctuations (variance), since their dynamic models are decoupled. The error $\E(y-y_r)$ in macroscopic dynamics is computed by perturbing the initial state $x_0$ from the steady-state $x_{s s}$ and measured in $L_1$, $L_2$ and $L_{\infty}$ norms. 

A comparison in terms of the fluctuations $\eta$ is performed by computing the covariance matrix of the outputs $y$ and $y_r$. For the full order model this matrix is computed as 
\[
\cov(y)= C \cov(\eta \eta^\ast) C^\ast =  C P C^\ast,
\]
where $P$ satisfies the Lyapunov equation~\eqref{eq:cov-lna}. Similarly, the covariance matrix for the reduced order models $\cov(y_r)$ can be computed. Note that the $L_2$ error of the outputs serves as a lower bound on the $\Htwo$ norm computation.

\subsection{Toy Example. \label{ex:cov}}
The first network we consider consists of only four species, see Figure~\ref{fig:toy-ex}.  One can interpret the species $S_1$ and $S_3$ as mRNA, and $S_2$ and $S_4$ as the corresponding proteins. 
   \begin{align*}
      &\dot m_i = \frac{c_{i 1}}{1 + p_j^2}-c_{i 2} m_i, \quad i\in \left\{1,2\right\}, \\
      &\dot p_i = c_{i 3} m_i-c_{i 4} p_i.
   \end{align*}
 where $c_{i1}$ are constants, $m_1$, $m_2$ are species $S_1$, $S_3$, $p_1$, $p_2$ are species $S_2$, $S_4$. We compare the simulation results for the full order model, the reduced order model obtained by~\cite{thomas2012rigorous}, and the reduced order model obtained from reduction according to the configurations in Figure~\ref{fig:toy-ex} with parameters
\[
  c_{1\cdot}= c_{2\cdot}  = \begin{pmatrix} 3 & 4 & 1 & 0.2  \end{pmatrix}^\ast.
\]
 \begin{figure}[t]
    \centering
   \subfigure[A configuration for reducing one state]{\includegraphics[width=0.33\columnwidth]{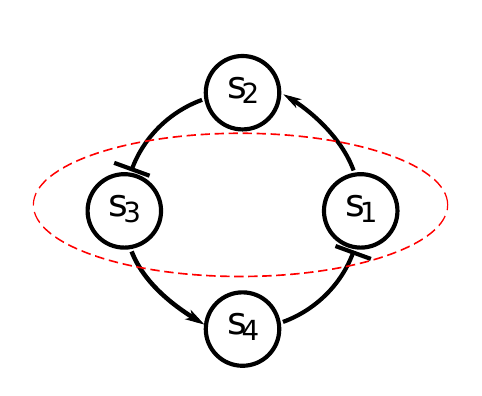} \label{fig:toy-ex-1}}\qquad
   \subfigure[A configuration for reducing one state]{\includegraphics[width=0.33\columnwidth]{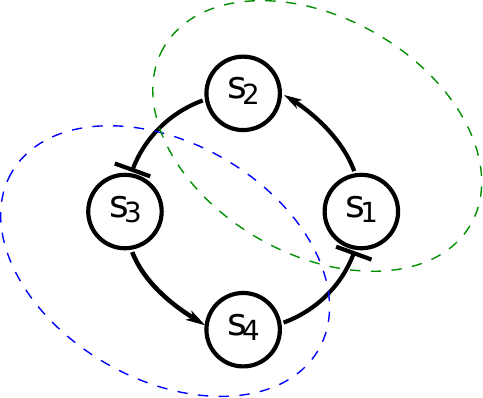} \label{fig:toy-ex-2}}
\caption{Toy Example. In the configuration in the left panel, species $S_1$ and $S_3$ are grouped together, while reducing one state. In the configuration in the left panel, species $S_1$, $S_2$, and $S_3$, $S_4$ are grouped together, while reducing one state from each group.}\label{fig:toy-ex}
\end{figure}
We initiate simulations from $x_0 = \begin{pmatrix}
1 & 10 & 1 & 1
\end{pmatrix}^\ast$, which lies in the domain of attraction of the steady-state $x_{ss} = \begin{pmatrix}
0.2889 & 3.4611 & 0.0578 & 0.6922
\end{pmatrix}^\ast$. We compute the initial covariance $P_0$ from the linearisation around $x_0$ and computing the covariance at this point. We compute the projections with respect to the linearised model at steady-state $x_{ss}$. This linearised model has a Metzler drift matrix, hence the diagonal Gramian always exists and we can test different configurations for model reduction.

\begin{figure}[t]
\centering
\subfigure[Errors in the mean number of species $S_1$ (dashed) and $S_3$ (solid) between full and reduced models. ]{\includegraphics[width=0.45\columnwidth]{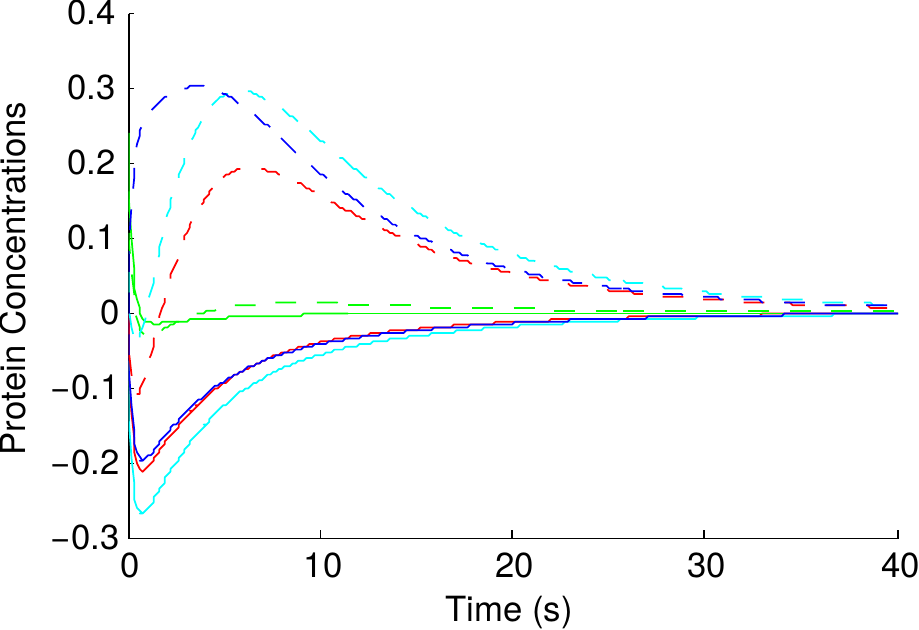} \label{fig:means}}
 \subfigure[Errors in the variance of fluctuations in the number of species $S_1$  between full and reduced models. ]{\includegraphics[width=0.45\columnwidth]{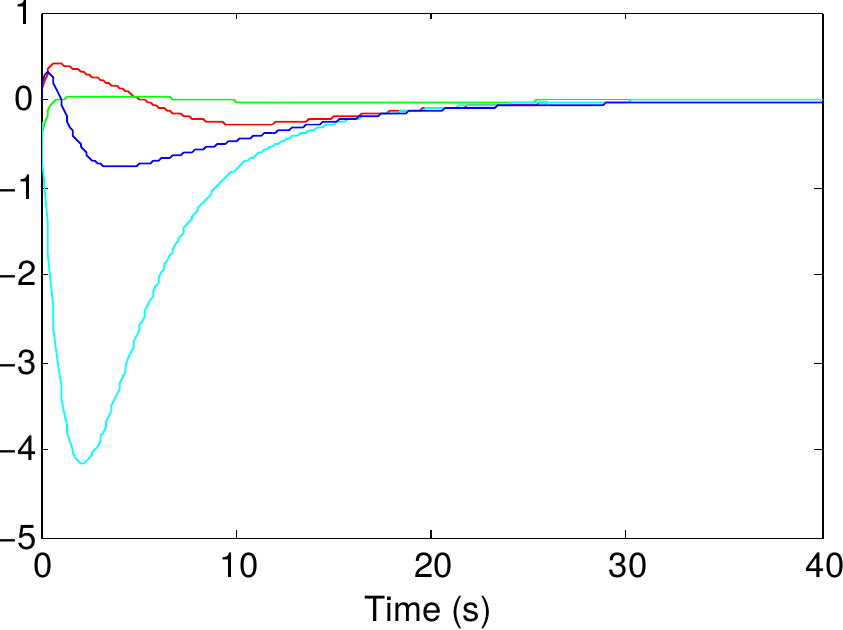} \label{fig:cov11}}
 \subfigure[Errors in the covariance in fluctuations in the number of species $S_1$ and $S_3$  between full and reduced models.]{\includegraphics[width=0.45\columnwidth]{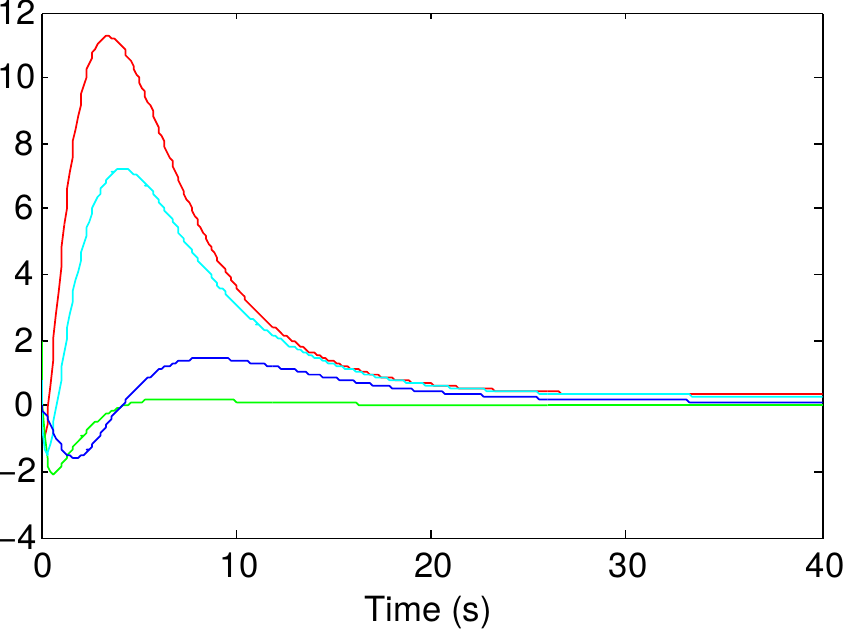} \label{fig:cov21}}
 \subfigure[Errors in the variance of fluctuations in the number of species $S_3$  between full and reduced models.]{\includegraphics[width=0.45\columnwidth]{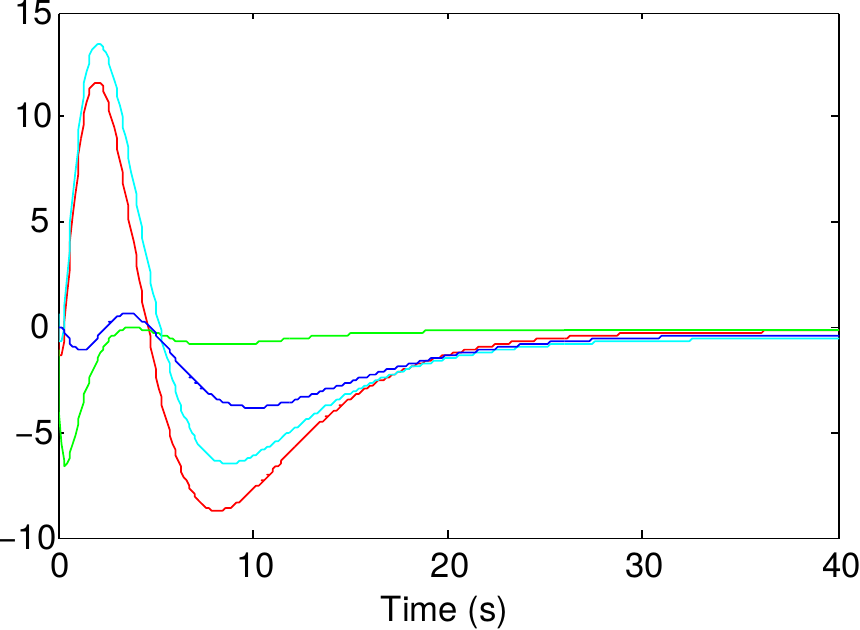} \label{fig:cov22}}
\caption{In all figures, the blue lines correspond to the evolution of the error between full and reduced model obtained by $\Hinf$ balancing according to the configuration in Figure~\ref{fig:toy-ex-1}, the red lines correspond to the evolution of error between full and reduced order model obtained using~\cite{thomas2012rigorous}, the green lines correspond between full and reduced model obtained by $\Hinf$ balancing according to the configuration in Figure~\ref{fig:toy-ex-2}. The same correspondence is valid for the cyan lines and $\Htwo$ balancing.} 
\label{fig:stoch}
\end{figure}

Comparisons between the various reduced and full models are depicted in~Figure~\ref{fig:stoch}. It can be seen that the method from~\cite{thomas2012rigorous} always performs worse than the $\Hinf$ balancing method. This example also highlights the importance of selecting which parts of the system to reduce: in the configuration depicted in Figure~\ref{fig:toy-ex-1} we reduce one state, but it is worse than reducing two states according to configuration depicted in Figure~\ref{fig:toy-ex-2}. This happens since we do not respect the topology. The $\Htwo$ balancing does poorly compared to~\cite{thomas2012rigorous} even though reduction for the linearised models gave a similar result to $\Hinf$ balancing. 
%

\subsection{Kinetic Model of Yeast Glycolysis. \label{ex:gly}}
This 12-state model was published in~\cite{van2012testing} and it consists of twelve metabolites and four boundary fluxes. We set the state-space of the model to:
\begin{gather*}
x = \left(\begin{array}{cccccc}\text{GLCi} &  \text{G6P} &  \text{F6P} &  \text{F16P} &  \text{TRIO} &  \text{BPG} \end{array} ...\right.   \\
\left.\begin{array}{cccccc}    \text{P3G} & \text{P2G} & \text{PEP} &  \text{PYR} &  \text{ACALD} &  \text{NADH} \end{array}\right)^\ast 
\end{gather*}

We model the network's response to the change of glucose in the system as in~\cite{rao2014model}.  
We treat levels of $\mathrm{ATP}$ and glycose $\mathrm{GLCo}$ as control inputs. At time zero we change the levels of $\mathrm{ATP}$ and $\mathrm{GLCo}$ from $3$ to $1.5$ and $0.25$ to $5$ respectively. Let $x_0$ be the steady state with $\mathrm{ATP} =3$ and $\mathrm{GLCo}=0.25$, while $x_{s s}$ be the steady state $\mathrm{ATP} =1.5$ and $\mathrm{GLCo}=5$.
\begin{figure}[t]
  \centering
  \includegraphics[width=0.8\columnwidth]{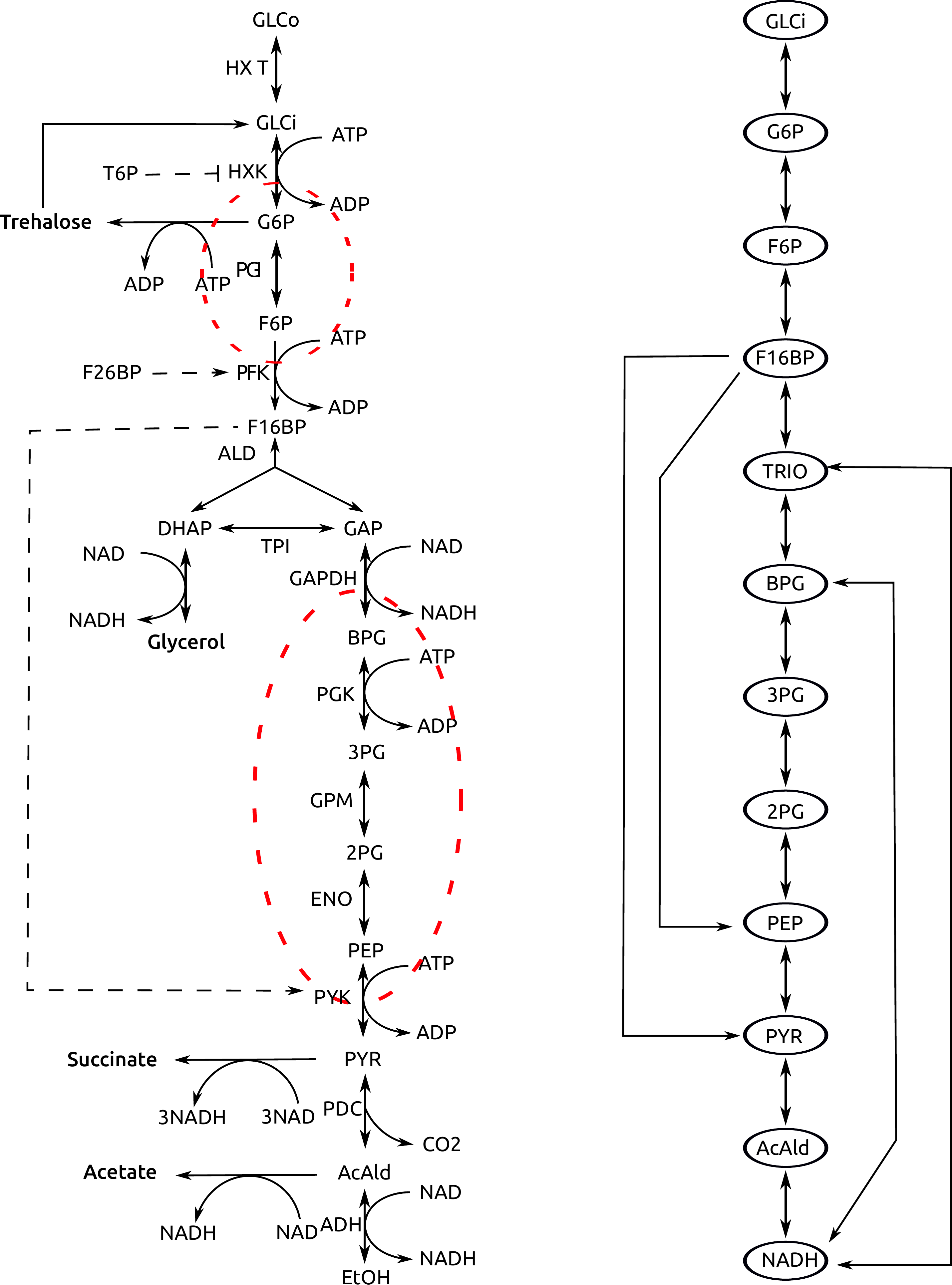} 
  \caption{Kinetic model of yeast glycolysis. In the left panel the biochemical graph is depicted. In the right panel a graph of dynamic interactions between metabolites. The red ellipses mark the reduction regions of interest.}\label{fig:glycerol}
\end{figure}
%
%

We refer the reader to~\cite{van2012testing} for a complete description of the model, however, we mention that the drift matrices of the linearised dynamics around $x_0$ and $x_{ss}$ are stable $\mH$ matrices. Hence we can compute diagonal Lyapunov functions by solving semidefinite programmes. We pick two groups of species to reduce \{BPG, P3G, P2G, PEP\} and \{GLCi, G6P, F6P\}, see Figure \ref{fig:glycerol}, however, we consider them separately. Meaning that the Gramians $P$ and $Q$ have three blocks not two. As demonstrated on the toy example violating topological constraints in the graph (mixing these two groups) can result in reduced order models of worse quality.

The error is computed by simulating the resulting reduced order models and comparing them as described at the beginning of the section. The results are presented in Table~\ref{tab:gly-red} for various reduction configurations. We apply~\cite{thomas2012rigorous} to metabolite concentrations, while using the proposed method we try to lump those metabolites in one state, so that the number of reduced states is similar in both cases. The first two rows of each sub-table in Table~\ref{tab:gly-red} can be compared directly, and it is clear that the proposed method performs better in terms of quality than~\cite{thomas2012rigorous}. 
The proposed methods are also more flexible in terms of reduction choices. In the forth row of Table~\ref{tab:gly-red}-B, the region \{BPG-PEP\} contains four metabolites; however, we reduced only two states after computing the state-space transformation. In the fifth row, in the region  \{GLCi-F6P\}, which contains three metabolites, we reduce just one state and this provides us with the best model among all the reduction attempts. Finally, the results in Table~\ref{tab:gly-red}-C indicate that the $\Htwo$ balancing outperforms the $\Hinf$ balancing on this example.
\begin{table}[t]
\centering
\caption{Reduction of the glycolysis model. The error $\E\|y(t) -y_r(t)\|$ in different norms, where $y$ and $y_r$ are the trajectory of the full and reduced order models, respectively} \label{tab:gly-red} 
  \begin{tabular}{cccc}
  \multicolumn{4}{c}{ \sc Table~\ref{tab:gly-red}-A. Approximation results obtained by using~\cite{thomas2012rigorous}} \\
  Reduced States $\backslash$ Error & $L_1$ & $L_2$ & $L_{\infty}$\\
\hline
\hline
  F6P, 2PG, PEP              & $1.2143$ & $0.7490$ & $0.9782$ \\
  F6P, 3PG, 2PG, PEP         & $1.5740$ & $1.0674$ & $1.5582$ \\
\hline\\[-3pt]    
  \end{tabular}
  \begin{tabular}{ccccc}
  \multicolumn{5}{c}{ \sc Table~\ref{tab:gly-red}-B. Reduction by $\{k_1, k_2\}$ states } \\
  \multicolumn{5}{c}{ \sc in every region using the $\Hinf$ balancing} \\
    Lumped Region(s)         & $\{k_1, k_2\}$  & $L_1$  & $L_2$  & $L_{\infty}$ \\
  \hline
  \hline
  \{G6P, F6P\}, \{3PG--PEP\} & $\{1, 2\}$ & $1.1816$ & $0.7864$ & $1.0118$ \\
  \{G6P, F6P\}, \{BPG--PEP\} & $\{1, 3\}$ & $1.4176$ & $0.7273$ & $0.8702$ \\
  \{G6P, F6P\}, \{3PG--PEP\} & $\{1, 1\}$ & $0.3818$ & $0.2527$ & $0.3243$ \\
  \{G6P, F6P\}, \{BPG--PEP\} & $\{1, 2\}$ & $1.4129$ & $0.7242$ & $0.8651$ \\
\hline\\[-3pt]
  \end{tabular}
  \begin{tabular}{ccccc}
  \multicolumn{5}{c}{ \sc Table~\ref{tab:gly-red}-C. Reduction by $\{k_1, k_2\}$ states } \\
  \multicolumn{5}{c}{ \sc in every region using the $\Htwo$ balancing} \\
    Lumped Region(s)         & $\{k_1, k_2\}$  & $L_1$  & $L_2$  & $L_{\infty}$ \\
  \hline
  \hline
  \{G6P, F6P\}, \{3PG--PEP\} & $\{1, 2\}$ & $1.0906$ & $0.6857$ & $0.8815$ \\
  \{G6P, F6P\}, \{BPG--PEP\} & $\{1, 3\}$ & $1.1002$ & $0.6936$ & $0.8954$ \\
  \{G6P, F6P\}, \{3PG--PEP\} & $\{1, 1\}$ & $0.2935$ & $0.1587 $ & $0.1979$ \\
  \{G6P, F6P\}, \{BPG--PEP\} & $\{1, 2\}$ & $0.2104$ & $0.1080$ & $0.1313$ \\
\hline
  \end{tabular}  
\end{table}

\section{Conclusion}
In this paper, we studied model order reduction of the Linear Noise Approximation of the Chemical Master Equation. We showed that a recently proposed time-scale separation method results in a reduced order model, which converges in the mean-square sense to the slow dynamics of the LNA. We then considered the application of structure preserving, projection-based model reduction to the  LNA. One of the bottlenecks of projection-based methods, is existence of the projectors, which cannot be always guaranteed. We were able to provide sufficient conditions that describe when such projectors exist. Furthermore, these are spectral conditions on the drift matrix of the linearised dynamics, hence they are easy to check. 

As a straightforward extension of this approach, we may consider model reduction for time varying SDEs using for example~\cite{san+04}. This may provide better quantitative approximations of LNAs. However, there are deeper issues with the LNA itself. If the underlying CME is bimodal (in some cases this implies, for example, that the deterministic model of macroscopic reaction rates is bistable), then LNA and hence our approximation procedure will not capture this phenomenon. Therefore one needs to derive projection-based reduction methods for the CME or at least for the Chemical Langevin Equation, which is a nonlinear SDE. In this case, it is perhaps possible to use nonlinear balancing tools~\cite{Scherpen93}, which are based on energy functions. The controllability energy function is identical to the action functional used the fastest escape problems in the small noise limit~\cite{freidlin2012random}. It remains to establish, however, if this action functional can be used for model reduction.

\appendices
\section{Technical Lemmas for the Proof  of Theorem~\ref{thm:conv} \label{app:convergence}}
\begin{lem}[Gronwall Lemma]
\label{prop:gronwall}
Let $g(t)$, $t\in[0,T]$ be a non-negative, continuous real-valued function that satisfies
\begin{equation*}
g(t) \le C + K \int_0^tg(s)ds
\end{equation*}
for all $t\in [0,a]$ where $C$ and $K$ are positive constants. Then it follows that for all $t\in [0,a]$,
\begin{equation*}
g(t)\le Ce^{Kt}.
\end{equation*} 
\end{lem}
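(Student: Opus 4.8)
The plan is to convert the integral inequality into a differential inequality by introducing the auxiliary function
\[
h(t) := C + K\int_0^t g(s)\,ds, \qquad t\in[0,a].
\]
By hypothesis we immediately have $g(t)\le h(t)$ for every $t$, and $h(0)=C$. Since $g$ is continuous, the fundamental theorem of calculus guarantees that $h$ is continuously differentiable with $h'(t)=K\,g(t)$. Substituting the pointwise bound $g(t)\le h(t)$ into this derivative yields the differential inequality $h'(t)\le K\,h(t)$, which is the key reformulation: it is far easier to handle than the original integral form.

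The heart of the argument is the standard integrating-factor trick. Multiplying through by $e^{-Kt}>0$ and collecting terms gives
\[
\frac{d}{dt}\!\left(e^{-Kt}h(t)\right) = e^{-Kt}\bigl(h'(t)-K\,h(t)\bigr)\le 0,
\]
so the function $t\mapsto e^{-Kt}h(t)$ is non-increasing on $[0,a]$. Comparing endpoint values (equivalently, integrating from $0$ to $t$) gives $e^{-Kt}h(t)\le h(0)=C$, i.e. $h(t)\le C e^{Kt}$. Combining with the bound $g(t)\le h(t)$ established above delivers the claimed inequality $g(t)\le C e^{Kt}$ for all $t\in[0,a]$.

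There is no genuine obstacle here; this is the classical Gronwall inequality and the proof is routine. The only points requiring care are that the continuity of $g$ is precisely what makes $h$ differentiable, so that the reduction to a differential inequality is legitimate, and that $e^{-Kt}$ is strictly positive, so that multiplying the inequality preserves its direction. Positivity of $K$ is used only to guarantee these signs; the same computation in fact goes through for any real $K$. An alternative to the integrating-factor step would be to iterate the inequality by repeated back-substitution, producing the partial sums of the series for $C e^{Kt}$ and passing to the limit, but the monotonicity argument above is cleaner and avoids estimating a remainder term.
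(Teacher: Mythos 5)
Your proof is correct, and it is the standard integrating--factor argument for the integral form of Gronwall's inequality: introduce $h(t)=C+K\int_0^t g(s)\,ds$, observe $g\le h$, $h'=Kg\le Kh$, and conclude $h(t)\le Ce^{Kt}$ by monotonicity of $e^{-Kt}h(t)$. For comparison, the paper does not prove this lemma at all --- it is stated in the appendix as the classical Gronwall lemma and used as a black box in the proof of Theorem~\ref{thm:conv} --- so there is nothing in the paper to diverge from; your write-up simply supplies the missing (routine) argument. One small correction to your closing remark: the claim that ``the same computation in fact goes through for any real $K$'' is false. The step $h'(t)=Kg(t)\le Kh(t)$ uses $K>0$ essentially, since multiplying $g(t)\le h(t)$ by a negative $K$ reverses the inequality; and indeed the conclusion itself can fail for $K<0$ (e.g.\ $g\equiv C/2$, $K=-1/a$ satisfies the hypothesis on $[0,a]$ with equality at $t=a$, yet $C/2\le Ce^{-1}$ is false). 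Since the lemma assumes $K>0$, this does not affect the validity of your proof, but the aside should be deleted.
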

\begin{lem}\label{lem:lip-bounds-proof}
Let $Z\subset \R$ be a compact set, then any polynomial function $p:Z\rightarrow  \R$ is Lipschitz.
\end{lem}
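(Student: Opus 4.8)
The plan is to reduce the claim to a uniform bound on the derivative of $p$ and then invoke the Mean Value Theorem. Since $Z \subset \R$ is compact, it is closed and bounded, so there exist finite $a = \min Z$ and $b = \max Z$ with $Z \subseteq [a,b]$. The derivative $p'$ is again a polynomial, and hence continuous on the closed bounded interval $[a,b]$. This is the structural observation that makes everything work: differentiating a polynomial leaves us in the same class of functions, and continuity on a compact interval is all we need.

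First I would apply the extreme value theorem to $p'$ on $[a,b]$: because $p'$ is continuous on a compact interval, the quantity $M := \max_{\xi \in [a,b]} |p'(\xi)|$ is finite and attained. This constant $M$ will serve as the Lipschitz constant for $p$ on $Z$.

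Next, for any two distinct points $x, y \in Z$, both lie in $[a,b]$, so the Mean Value Theorem applied to $p$ on the interval between them produces a point $\xi$ strictly between $x$ and $y$ (hence $\xi \in [a,b]$) with $p(x) - p(y) = p'(\xi)(x-y)$. Taking absolute values gives $|p(x) - p(y)| = |p'(\xi)|\,|x-y| \le M |x-y|$, which is precisely the Lipschitz estimate with constant $M$, uniform over all $x,y \in Z$.

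I do not expect any genuine obstacle here; the result is elementary. The only point requiring a moment of care is that $Z$ need not be an interval, so the intermediate point $\xi$ delivered by the Mean Value Theorem may fall outside $Z$. This causes no difficulty, because a polynomial $p$ (and therefore $p'$) is defined on all of $\R$, so it is legitimate, and in fact cleaner, to bound $|p'|$ over the enclosing interval $[a,b]$ rather than over $Z$ itself. The conclusion is a single Lipschitz constant $M$ valid on $Z$, and indeed on the whole of $[a,b]$.
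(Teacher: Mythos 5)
Your proof is correct and follows essentially the same route as the paper's: enclose the compact set $Z$ in an interval $[a,b]$, bound $|p'|$ there by continuity of the derivative on a compact interval, and conclude via the Mean Value Theorem. Your explicit remark that the intermediate point $\xi$ may lie in $[a,b]\setminus Z$ is a small point of care the paper glosses over, but the argument is otherwise identical.
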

\begin{proof}
As $Z$ is compact by definition we can assume that $p:[a,b]\rightarrow \R$, further, $p$ is a polynomial and therefore in $\cC^{\infty}$, thus its first derivative $p'$ exists at every point and is continuous, Moreover $p':[a,b]\rightarrow \R$ is bounded, i.e. there exists a positive constant $K$ such that $\|p'(x)\| \le K$ $\forall x \in [a,b]$. Given any $x,y \in R$ such that $a\le y \le x \le b$, by application of the Mean Value Theorem it follows that 
\begin{equation*}
\|p(x)-p(y)\| = \|p'(c)\| \|x-y\| \le K\|x-y\|
\end{equation*}
and so $K$ is the Lipschitz constant of $p$ on $[a,b]$. The extension to the multivariable case is straightforward as the Multivariable Mean Value Theorem can be applied and all partial derivatives of a $C^{\infty}$ function are bounded on a compact set.
\end{proof}
\begin{lem} \label{lem:c1-bounds} 
Let 
\begin{align*}
C_1(t) = \int_{0}^{t} A_{1 2} A_{2 2}^{-1}(\varepsilon^{-1/2} A_{2 2} \eta_2   + A_{2 1} \eta_1 + B_2 \dot w) d \tau
\end{align*}
and assume that $A_{22}(x)$ is a stable matrix for all $x \in \cD$ where $\cD$ is a given connected domain. Then the following bound holds:  $\E\|C_1(t)\|^2  = O(\varepsilon)$.
\end{lem}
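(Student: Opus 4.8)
The plan is to exploit the fact that the bracketed integrand in $C_1$ is, up to the factor $\varepsilon^{1/2}$, a total It\^o differential of the fast fluctuation $\eta_2$. Multiplying the equation~\eqref{eq:full-model-eta2} for $\dot\eta_2$ by $\varepsilon^{1/2}$ gives $\varepsilon^{1/2}\dot\eta_2 = A_{2 1}\eta_1 + \varepsilon^{-1/2}A_{2 2}\eta_2 + B_2\dot w$, which is exactly the term inside the parentheses of $C_1$. Writing $M(\tau) := A_{1 2}(x(\tau))A_{2 2}^{-1}(x(\tau))$, we therefore obtain the representation $C_1(t) = \varepsilon^{1/2}\int_0^t M(\tau)\,d\eta_2(\tau)$. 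The value of this form is that the apparently singular $\varepsilon^{-1/2}$ term has been absorbed into the increment $d\eta_2$, leaving only the explicit prefactor $\varepsilon^{1/2}$.

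Since the macroscopic trajectory $x(\tau)$ is deterministic and $A_{2 2}$ is invertible with a bounded inverse on the compact set $\cD$ (by the stability assumption), $M(\tau)$ is a deterministic, continuously differentiable matrix function of bounded variation. I would then integrate by parts, which incurs no It\^o correction because $M$ has no martingale part, obtaining
\begin{equation*}
C_1(t) = \varepsilon^{1/2}\Big(M(t)\eta_2(t) - M(0)\eta_2(0) - \int_0^t \eta_2(\tau)\dot M(\tau)\,d\tau\Big).
\end{equation*}
The strategy is to show each of the three contributions has mean square of order $O(1)$, so the overall $\varepsilon$ factor yields $\E\|C_1(t)\|^2 = O(\varepsilon)$. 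A first ingredient is a uniform second-moment bound $\sup_{0\le t\le T}\E\|\eta_2(t)\|^2 = O(1)$, independent of $\varepsilon$. This follows from the local exponential stability of $A_{2 2}$: applying It\^o's formula to $\|\eta_2\|^2$ with a Lyapunov weighting and closing the coupled differential inequalities for $\E\|\eta_1\|^2$ and $\E\|\eta_2\|^2$ by a composite Lyapunov function, the $\varepsilon^{-1}$ dissipation of the fast block dominates the $\varepsilon^{-1/2}$ cross terms. Given this, the two boundary terms are immediately $O(\varepsilon)$ in mean square by boundedness of $M$.

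The remaining term $\varepsilon^{1/2}\int_0^t \eta_2\dot M\,d\tau$ is the crux. Expanding $\dot M = \partial_{x_1}M\, g_1 + \varepsilon^{-1}\partial_{x_2}M\, g_2$ along the dynamics splits it into a slow and a fast part. The slow part, containing $g_1 = O(1)$, is handled by Jensen's inequality and the moment bound, giving $O(\varepsilon)$. The fast part carries a dangerous $\varepsilon^{-1}$; to control it I would use the singular-perturbation estimate $\int_0^t \|g_2(x_1(\tau),x_2(\tau))\|\,d\tau = O(\varepsilon)$, which holds because $\varepsilon^{-1}g_2 = \dot x_2$ and the total variation of $x_2$ (boundary layer plus slow tracking of $\hat z$) is $O(1)$ under the Tikhonov hypotheses. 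Writing the fast part as $\varepsilon^{-1/2}\int_0^t \eta_2\,\partial_{x_2}M\, g_2\,d\tau$ and applying Cauchy--Schwarz with respect to the finite measure $\|g_2\|\,d\tau$, its mean square is bounded by $\varepsilon^{-1}\big(\int_0^t\|g_2\|d\tau\big)\int_0^t \E\|\eta_2\|^2\|g_2\|\,d\tau \le \varepsilon^{-1}\,O(\varepsilon)\,O(1)\,O(\varepsilon) = O(\varepsilon)$. Combining the three estimates proves the claim.

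The main obstacle, as indicated, is the fast component of $\dot M$: naively it scales like $\varepsilon^{-1}$ and would destroy the bound. The resolution is the observation that $g_2$ is integrable to order $\varepsilon$ along the trajectory, which is precisely where the time-scale-separation hypotheses enter, combined with the uniform moment bound on the fast fluctuation $\eta_2$.
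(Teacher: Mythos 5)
Your proposal is correct, and it shares the paper's crucial first step: recognising that the bracketed integrand is exactly $\varepsilon^{1/2}\dot\eta_2$, so that $C_1(t)=\varepsilon^{1/2}\int_0^t A_{12}A_{22}^{-1}\,d\eta_2$. After that the two arguments diverge. The paper simply asserts the bound $\E\bigl\|\int_0^t A_{12}A_{22}^{-1}\,d\eta_2\bigr\|^2\le C_2\,\cov(\eta_2(t))$ with $C_2$ a uniform bound on $\|A_{12}A_{22}^{-1}\|$, and then checks that $\cov(\eta_2(t))$ stays bounded as $\varepsilon\to 0$ by inspecting the $(2,2)$ block of the covariance ODE in Proposition~\ref{prop:cov}, which in the limit becomes a Lyapunov equation for the stable matrix $A_{22}$. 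That route is much shorter, but the displayed inequality is really only immediate for a constant integrand; for time-varying $A_{12}A_{22}^{-1}$ and a semimartingale $\eta_2$ it tacitly requires precisely the kind of decomposition you carry out. Your integration by parts (legitimate, as $M$ is deterministic and of finite variation, so there is no It\^o correction) makes this rigorous, at the price of having to control $\dot M$, whose fast component scales like $\varepsilon^{-1}$; your resolution via the total-variation estimate $\int_0^t\|g_2\|\,d\tau=O(\varepsilon)$ (boundary layer plus slow tracking under the Tikhonov hypotheses) together with the uniform moment bound on $\eta_2$ is sound and is the genuinely new ingredient relative to the paper. In short: same decomposition of $C_1$, but you pay for full rigour with the extra $\dot M$ analysis, whereas the paper buys brevity by leaving the stochastic-integral bound at the level of a plausible but unjustified inequality and outsourcing the moment bound to the covariance ODE.
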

\begin{proof}
We have the following chain of inequalities
\begin{multline*}
C_1(t) = \int_{0}^{t} A_{1 2} A_{2 2}^{-1}(\varepsilon^{-1/2} A_{2 2} \eta_2   + A_{2 1} \eta_1 + B_2 \dot w) d \tau = \\
\varepsilon^{1/2} \int_{0}^{t} A_{1 2} A_{2 2}^{-1}(\varepsilon^{-1/2}  A_{2 1} \eta_1 +\varepsilon^{-1} A_{2 2} \eta_2   + \varepsilon^{-1/2} B_2 \dot w) d \tau = \\
\varepsilon^{1/2} \int_{0}^{t} A_{1 2} A_{2 2}^{-1} \dot \eta_2 dt = \varepsilon^{1/2} \int_{0}^{t} A_{1 2} A_{2 2}^{-1} d \eta_2 
\end{multline*}

Now let us bound $\E\|C_1(t)\|^2$ as follows:
\begin{gather*}
\E\|C_1(t)\|^2 = \varepsilon \E \left\|\int_{0}^{t} A_{1 2} A_{2 2}^{-1} d \eta_2 \right\|^2 \le 
\varepsilon C_2 \cov(\eta_2(t)),
\end{gather*}
where $C_2$ is such that $C_2 \ge \|A_{1 2} A_{2 2}^{-1}\|$ for all $0 \le \tau \le t$. It is left to verify that the covariance of $\eta_2(t)$ is bounded with $\varepsilon\rightarrow 0$. This is easily verified by considering the (2,2) block of $\dot{P}$ from Proposition \ref{prop:cov} and noting that as $\varepsilon\rightarrow 0$ the element becomes a Lyapunov equation for a stable $A_{22}$ matrix. 
%
%
%
\end{proof}

\section{Preservation of Diagonal Stability \label{s:diag-stab-preserv} }
In order to prove the Theorems~\ref{thm:lmi} and~\ref{thm:main-h2} we need to derive the following lemma for the preservation of diagonal stability.

\begin{lem}\label{lem:diag-stab}
Let $F$ be a diagonally stable matrix and consider the solution to the following Lyapunov inequality 
\begin{multline*}
\begin{pmatrix}
F_{1 1} & F_{1 2} \\[1pt]
F_{2 1} & F_{2 2}
\end{pmatrix} \begin{pmatrix}
\Sigma_{1} & 0 \\[1pt]
0     & P_{2}
\end{pmatrix} + \\ \begin{pmatrix}
\Sigma_{1} & 0 \\[1pt]
0     & P_{2}
\end{pmatrix} \begin{pmatrix}
F_{1 1}^\ast & F_{2 1}^\ast \\[1pt]
F_{1 2}^\ast & F_{2 2}^\ast
\end{pmatrix} \prec - X,
\end{multline*}
where $\Sigma_1$ is a diagonal, and $P_2$, $X$ are full positive definite matrices. Let $W$ be an invertible matrix partitioned $W^\ast = \begin{pmatrix}
w_1 & w_2
\end{pmatrix}$, where $w_1$ has $r$ columns. Let also $V = W^{-1} = \begin{pmatrix}
v_1 & v_2
\end{pmatrix}$, where $v_1$ has $r$ columns. Furthermore, let $V$ be such that $V P_2 V^\ast =\Sigma_2 = \begin{pmatrix}
\Sigma_{2,1} & 0 \\
0            & \Sigma_{2,2}
\end{pmatrix}$, where $\Sigma_{2,1}$ is an $r$ by $r$ diagonal matrix and $\Sigma_{2,2}$ is a diagonal matrix of an appropriate dimension. Then the matrix 
\begin{multline*}
F_r  = \begin{pmatrix}
F_{1 1} & F_{1 2} w_1 \\
v_1^\ast F_{2 1} & v_1^\ast F_{2 2} w_1
\end{pmatrix} - \\
\begin{pmatrix}
F_{1 2} \\
v_1^\ast F_{2 2} 
\end{pmatrix} w_2  (v_2^\ast F_{2 2} w_2)^{-1} v_2^\ast \begin{pmatrix}
 F_{2 1}^\ast \\  (F_{2 2} w_1)^\ast
\end{pmatrix}^\ast
\end{multline*}
is diagonally stable.
\end{lem}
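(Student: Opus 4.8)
The plan is to turn the hypothesis into a \emph{diagonal} Lyapunov certificate for a transformed matrix, and then to show that singular perturbation leaves such a certificate intact. First I would balance the lower block. Since $V P_2 V^\ast = \Sigma_2 = \diag{\Sigma_{2,1},\Sigma_{2,2}}$ is diagonal, the block transformation $\hat T = \diag{I_k, V}$ (with $\hat T^{-1} = \diag{I_k, W}$, because $W = V^{-1}$) sends the given certificate to a diagonal one. Conjugating the hypothesis $F\diag{\Sigma_1,P_2} + \diag{\Sigma_1,P_2}F^\ast \prec -X$ by $\hat T$ on the left and $\hat T^\ast$ on the right, and writing $\hat F = \hat T F \hat T^{-1}$, gives
\[
\hat F\,\hat\Sigma + \hat\Sigma\,\hat F^\ast \prec -\hat T X\hat T^\ast \prec 0, \qquad \hat\Sigma := \diag{\Sigma_1,\Sigma_{2,1},\Sigma_{2,2}},
\]
where $\hat\Sigma$ is positive definite and \emph{diagonal}, since $\Sigma_1$ is diagonal by hypothesis and $V P_2 V^\ast$ is diagonal.

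Next I would carry the partitions of $V$ and $W$ through $\hat F = \diag{I_k,V}F\diag{I_k,W}$ and group the states into a \emph{kept} block $a$ of size $k+r$ (carrying the diagonal weight $\hat\Sigma_a = \diag{\Sigma_1,\Sigma_{2,1}}$) and an \emph{eliminated} block $b$ of size $n-k-r$ (carrying $\hat\Sigma_b=\Sigma_{2,2}$). Using the orthogonality relations implied by $V = W^{-1}$, namely $v_1^\ast w_1 = I_r$, $v_2^\ast w_2 = I$, $v_1^\ast w_2 = 0$ and $v_2^\ast w_1 = 0$, a direct computation identifies $\hat F_{ab} = \begin{pmatrix}F_{12}\\v_1^\ast F_{22}\end{pmatrix}w_2$, $\hat F_{ba} = v_2^\ast\begin{pmatrix}F_{21} & F_{22}w_1\end{pmatrix}$ and $\hat F_{bb}=v_2^\ast F_{22}w_2$, and shows that the matrix $F_r$ in the statement is exactly the Schur complement $F_r = \hat F_{aa} - \hat F_{ab}\hat F_{bb}^{-1}\hat F_{ba}$, i.e. the singular-perturbation reduction of $\hat F$ that removes block $b$. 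The lemma then reduces to a single claim: if $\hat F\hat\Sigma + \hat\Sigma\hat F^\ast\prec 0$ with block-diagonal $\hat\Sigma = \diag{\hat\Sigma_a,\hat\Sigma_b}$, then $F_r\hat\Sigma_a + \hat\Sigma_a F_r^\ast \prec 0$; since $\hat\Sigma_a$ is diagonal, this is precisely diagonal stability of $F_r$.

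To establish that claim, which is the technical core, I would use the observation recorded after Proposition~\ref{prop:br} that balanced singular perturbation is balanced truncation of the inverse system. A diagonal-stability certificate forces $\hat F$ to be Hurwitz, and the $(b,b)$ block of the inequality, $\hat F_{bb}\hat\Sigma_b + \hat\Sigma_b\hat F_{bb}^\ast\prec 0$, makes $\hat F_{bb}$ Hurwitz, so both $\hat F_{bb}$ and the Schur complement $F_r$ are invertible. Conjugating by $\hat F^{-1}$ gives $\hat F^{-1}\hat\Sigma + \hat\Sigma\hat F^{-\ast} = \hat F^{-1}(\hat F\hat\Sigma+\hat\Sigma\hat F^\ast)\hat F^{-\ast}\prec 0$, so the \emph{same} diagonal $\hat\Sigma$ certifies $\hat F^{-1}$. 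By the block-inversion formula the $(a,a)$ block of $\hat F^{-1}$ equals $(\hat F_{aa}-\hat F_{ab}\hat F_{bb}^{-1}\hat F_{ba})^{-1} = F_r^{-1}$; extracting the principal $(a,a)$ block of the negative definite matrix $\hat F^{-1}\hat\Sigma+\hat\Sigma\hat F^{-\ast}$ (using that $\hat\Sigma$ is block diagonal, and that a principal block of a negative definite matrix is negative definite) yields $F_r^{-1}\hat\Sigma_a + \hat\Sigma_a F_r^{-\ast}\prec 0$. A final congruence by $F_r$ on the left and $F_r^\ast$ on the right turns this into $F_r\hat\Sigma_a + \hat\Sigma_a F_r^\ast\prec 0$, which proves the claim and hence the lemma.

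The step I expect to demand the most care is the bookkeeping in the second paragraph: matching the transposed partitions $V = \begin{pmatrix}v_1 & v_2\end{pmatrix}$ and $W^\ast = \begin{pmatrix}w_1 & w_2\end{pmatrix}$ together with the orthogonality relations to the exact block expressions for $\hat F_{aa},\hat F_{ab},\hat F_{ba},\hat F_{bb}$, so that the stated $F_r$ is recognised as the Schur complement with the correct diagonal sub-block $\Sigma_{2,1}$ retained. Once that identification is secure, the remaining Lyapunov manipulations are short: everything rides on the two congruence transformations (by $\hat T$ and by $F_r$) and on the elementary principal-submatrix fact.
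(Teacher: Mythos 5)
Your proof is correct and follows essentially the same route as the paper's: a congruence that turns the block-diagonal certificate $\diag{\Sigma_1,P_2}$ into a fully diagonal one, identification of $F_r$ as the Schur complement of the transformed matrix, passage to the inverse matrix $H=\hat F^{-1}$ via the conjugation $H(\hat F\hat\Sigma+\hat\Sigma\hat F^\ast)H^\ast$, and extraction of the leading principal block. If anything, your write-up is more careful than the paper's, which identifies the upper-left block of $H$ directly with $F_r$ (it is really $F_r^{-1}$, by the block-inversion formula) and therefore elides the final congruence by $F_r$ that you make explicit.
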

\begin{proof}
By the premise we have that 
\begin{multline*}
\begin{pmatrix}
F_{1 1}          & F_{1 2} w_1           & F_{1 2} w_2 \\
v_1^\ast F_{2 1} & v_1^\ast F_{2 2} w_1  & v_1^\ast F_{2 2} w_2 \\
v_2^\ast F_{2 1} & v_2^\ast F_{2 2} w_1  & v_2^\ast F_{2 2} w_2 
\end{pmatrix} \begin{pmatrix}
\Sigma_{1} & 0  & 0\\
0     & \Sigma_{2,1} & 0\\
0 & 0 &\Sigma_{2,2}
\end{pmatrix} + \\ \begin{pmatrix}
\Sigma_{1} & 0  & 0\\
0     & \Sigma_{2,1} & 0\\
0 & 0 &\Sigma_{2,2}
\end{pmatrix} \begin{pmatrix}
F_{1 1}^\ast & F_{2 1}^\ast v_1              & F_{2 1}^\ast v_2 \\
w_1^\ast F_{1 2}^\ast & w_1^\ast F_{2 2}^\ast v_1  & w_1^\ast F_{2 2}^\ast v_2 \\
w_2^\ast F_{1 2}^\ast & w_2^\ast F_{2 2}^\ast v_1  & w_2^\ast F_{2 2}^\ast v_2 
\end{pmatrix} \prec \\
-\begin{pmatrix}
I & 0 \\
0 & V^\ast
\end{pmatrix} X \begin{pmatrix}
I & 0 \\
0 & V
\end{pmatrix}.
\end{multline*}
Let $H = (V^\ast F W)^{-1}$ and partitioned conformally to $V^\ast F W$, that is 
\begin{gather*}
H = \begin{pmatrix}
H_{1 1} & H_{1 2} & H_{1 3}\\
H_{2 1} & H_{2 2} & H_{2 3} \\
H_{3 1} & H_{3 2} & H_{3 3} 
\end{pmatrix},
\end{gather*}
where it is straightforward to verify that $F_r  = \begin{pmatrix}
H_{1 1} & H_{1 2}       \\
H_{2 1} & H_{2 2}  
\end{pmatrix}$. Then we have that 
\begin{multline*}
\begin{pmatrix}
H_{1 1} & H_{1 2} & H_{1 3}\\
H_{2 1} & H_{2 2} & H_{2 3} \\
H_{3 1} & H_{3 2} & H_{3 3} 
\end{pmatrix}
 \begin{pmatrix}
\Sigma_{1} & 0  & 0\\
0     & \Sigma_{2,1} & 0\\
0 & 0 &\Sigma_{2,2}
\end{pmatrix} + \\ \begin{pmatrix}
\Sigma_{1} & 0  & 0\\
0     & \Sigma_{2,1} & 0\\
0 & 0 &\Sigma_{2,2}
\end{pmatrix} \begin{pmatrix}
H_{1 1}^\ast & H_{2 1}^\ast & H_{3 1}^\ast\\
H_{1 2}^\ast & H_{2 2}^\ast & H_{3 2}^\ast \\
H_{1 3}^\ast & H_{2 3}^\ast & H_{3 3}^\ast 
\end{pmatrix} \prec \\
- H \begin{pmatrix}
I & 0 \\
0 & V^\ast
\end{pmatrix} X \begin{pmatrix}
I & 0 \\
0 & V
\end{pmatrix} H^\ast.
\end{multline*}

It is immediate that the matrix
\begin{multline*}
\begin{pmatrix}
H_{1 1} & H_{1 2}       \\
H_{2 1} & H_{2 2}  
\end{pmatrix} \begin{pmatrix}
\Sigma_{1} & 0 \\
0     & \Sigma_{2,1}
\end{pmatrix} + \begin{pmatrix}
\Sigma_{1} & 0  \\
0     & \Sigma_{2,1}
\end{pmatrix} \begin{pmatrix}
H_{1 1}^\ast & H_{2 1}^\ast \\
H_{1 2}^\ast & H_{2 2}^\ast
\end{pmatrix}
\end{multline*}
is negative definite, which completes the proof.
\end{proof}
\bibliographystyle{plain}
\bibliography{bibl_reduction}

\begin{thebibliography}{10}

\bibitem{AndP12}
J.~Anderson and A.~Papachristodoulou.
\newblock A decomposition technique for nonlinear dynamical system analysis.
\newblock {\em IEEE Trans Autom Control}, 57(6):1516--1521, 2012.

\bibitem{andersondecentralised}
J.~Anderson and A.~Sootla.
\newblock Decentralised {H2} norm estimation and guaranteed error bounds using
  structured gramians.
\newblock In {\em Proc MTNS}, Groningen, Netherlands, July. 2014.

\bibitem{AntoulasBook}
A.~C. Antoulas.
\newblock {\em Approximation of Large-Scale Dynamical Systems (Advances in
  Design and Control)}.
\newblock SIAM, 2005.

\bibitem{arcak2011diagonal}
M.~Arcak.
\newblock Diagonal stability on cactus graphs and application to network
  stability analysis.
\newblock {\em IEEE Trans Autom Control}, 56(12):2766--2777, 2011.

\bibitem{aastrom2012introduction}
K.J. {\AA}str{\"o}m.
\newblock {\em Introduction to stochastic control theory}.
\newblock Courier Corporation, 2012.

\bibitem{cao2008slow}
Y.~Cao and L.~Petzold.
\newblock Slow-scale tau-leaping method.
\newblock {\em Computer methods in applied mechanics and engineering},
  197(43):3472--3479, 2008.

\bibitem{freidlin2012random}
M.~Freidlin and A.D. Wentzell.
\newblock {\em Random perturbations of dynamical systems}, volume 260.
\newblock Springer, 2012.

\bibitem{gillespie1977exact}
D.~T. Gillespie.
\newblock Exact stochastic simulation of coupled chemical reactions.
\newblock {\em J Phys Chem}, 81(25):2340--2361, 1977.

\bibitem{Gil01}
D.T. Gillespie.
\newblock Approximate accelerated stochastic simulation of chemically reacting
  systems.
\newblock {\em J Chem Phys}, 115(4):1716--1733, 2001.

\bibitem{gugercin:609}
S.~Gugercin, A.~C. Antoulas, and C.~Beattie.
\newblock $\mathcal{H}_2$ model reduction for large-scale linear dynamical
  systems.
\newblock {\em SIAM J. Matrix Anal. Appl.}, 30(2):609--638, 2008.

\bibitem{hartmann2011balanced}
C.~Hartmann.
\newblock Balanced model reduction of partially observed langevin equations: an
  averaging principle.
\newblock {\em Math Computer Modelling Dynamical Systems}, 17(5):463--490,
  2011.

\bibitem{herathmodel}
N.~Herath, A.~Hamadeh, and D.~Del~Vecchio.
\newblock Model reduction for a class of singularly perturbed stochastic
  differential equations.
\newblock In {\em Proc Am Control Conf}, Chicago, IL, July. 2015.

\bibitem{hershkowitz1985lyapunov}
D.~Hershkowitz and H.~Schneider.
\newblock Lyapunov diagonal semistability of real {H}-matrices.
\newblock {\em Linear algebra and its applications}, 71:119--149, 1985.

\bibitem{ishizaki2015clustered}
T.~Ishizaki, K.~Kashima, A.~Girard, J.~Imura, L.~Chen, and K.~Aihara.
\newblock Clustered model reduction of positive directed networks.
\newblock {\em Automatica}, 59:238--247, 2015.

\bibitem{kang2013separation}
H.-W. Kang, T.G. Kurtz, et~al.
\newblock Separation of time-scales and model reduction for stochastic reaction
  networks.
\newblock {\em Annals Applied Prob}, 23(2):529--583, 2013.

\bibitem{khas1968aver_c}
R.Z. Khasminskii.
\newblock On the principle of averaging for the it{\^o} stochastic differential
  equations.
\newblock {\em Kybernetika}, 4:260--279, 1968.

\bibitem{kokotovic1999singular}
P~Kokotovic, H.K. Khalil, and J.~O'Reilly.
\newblock {\em Singular perturbations methods in control: analysis and design}.
\newblock {SIAM}, 1999.

\bibitem{KrylovBogolyubov1937}
N.M. Krylov and N.N. Bogolyubov.
\newblock Les proprietes ergodiques des suites des probabilites en chaine.
\newblock {\em C. R. Math. Acad. Sci.}, 204:1454--1546, 1937.

\bibitem{kurtz1992averaging}
T.G. Kurtz.
\newblock Averaging for martingale problems and stochastic approximation.
\newblock In {\em Applied Stochastic Analysis}, pages 186--209. Springer, 1992.

\bibitem{LiuK10}
S.-J. Liu and M.~Krstic.
\newblock Continuous-time stochastic averaging on the infinite interval for
  locally {L}ipschitz systems.
\newblock {\em SIAM J Control Optimization}, 48(5):3589--3622, 2010.

\bibitem{LiuK10a}
S.-J. Liu and M.~Krstic.
\newblock Stochastic averaging in continuous time and its applications to
  extremum seeking.
\newblock {\em IEEE Trans Autom Control}, 55(10):2235--2250, 2010.

\bibitem{liu1989singular}
Y.~Liu and B.D.O. Anderson.
\newblock Singular perturbation approximation of balanced systems.
\newblock {\em Int J Control}, 50(4):1379--1405, 1989.

\bibitem{MonTC14}
N.~Monshizadeh, H.~L. Trentelman, and M.~K. Camlibel.
\newblock Projection-based model reduction of multi-agent systems using graph
  partitions.
\newblock {\em IEEE Transactions on Control of Network Systems}, 1(2):145--154,
  2014.

\bibitem{Moore1981}
B.~Moore.
\newblock Principal component analysis in linear systems: Controllability,
  observability, and model reduction.
\newblock {\em IEEE Trans. Autom. Control}, 26(1):17--32, Feb 1981.

\bibitem{MunK06}
B.~Munsky and M.~Khammash.
\newblock The finite state projection algorithm for the solution of the
  chemical master equation.
\newblock {\em The Journal of chemical physics}, 124(4):044104, 2006.

\bibitem{rantzer2012distributed}
A.~Rantzer.
\newblock Distributed control of positive systems.
\newblock {\em arXiv preprint arXiv:1203.0047}, 2012.

\bibitem{rao2014model}
S.~Rao, A.~van~der Schaft, K.~van Eunen, B.M. Bakker, and B.~Jayawardhana.
\newblock A model reduction method for biochemical reaction networks.
\newblock {\em BMC Syst Biol}, 8(1):52, 2014.

\bibitem{Reiss2003}
M.~Reiss.
\newblock {\em Stochastic differential equations. Lecture Notes, Humboldt
  University Berlin}.
\newblock 2003.

\bibitem{Sandberg09}
H.~Sandberg and R.~M. Murray.
\newblock Model reduction of interconnected linear systems.
\newblock {\em Optimal control applications \& methods}, 30(3):225--245, 2009.

\bibitem{san+04}
H.~Sandberg and A.~Rantzer.
\newblock Balanced truncation of linear time-varying systems.
\newblock {\em IEEE Trans. Autom. Control}, 49(2):217--229, February 2004.

\bibitem{Scherpen93}
J.M.A. Scherpen.
\newblock Balancing for nonlinear systems.
\newblock {\em Systems and Control Letters}, 21:143--153, 1993.

\bibitem{sontag2007monotone}
E~D Sontag.
\newblock Monotone and near-monotone biochemical networks.
\newblock {\em Systems and Synthetic Biology}, 1(2):59--87, 2007.

\bibitem{sootla2014projectionI}
A.~Sootla and J.~Anderson.
\newblock On projection-based model reduction of biochemical networks--part i:
  The deterministic case.
\newblock In {\em Proc Conf Decison Control}, pages 3615--3620, dec 2014.

\bibitem{sootla2014projectionII}
A.~Sootla and J.~Anderson.
\newblock On projection-based model reduction of biochemical networks--part ii:
  The stochstic case.
\newblock In {\em Proc Conf Decison Control}, pages 3621--3626, dec 2014.

\bibitem{sootla2016existence}
A.~Sootla and J.~Anderson.
\newblock On existence of solutions to structured lyapunov inequalities.
\newblock In {\em submission to the Ame Control Conf}, 2016.

\bibitem{Sootla2012positive}
A.~Sootla and A.~Rantzer.
\newblock Scalable positivity preserving model reduction using linear energy
  functions.
\newblock In {\em Proc. Conf. Decision Control}, pages 4285--4290, Dec. 2012.

\bibitem{thomas2012rigorous}
P.~Thomas, R.~Grima, and A.V. Straube.
\newblock Rigorous elimination of fast stochastic variables from the linear
  noise approximation using projection operators.
\newblock {\em Phys Review E}, 86(4):041110, 2012.

\bibitem{ThoSG12}
P.~Thomas, A.V. Straube, and R~Grima.
\newblock The slow-scale linear noise approximation: an accurate, reduced
  stochastic description of biochemical networks under timescale separation
  conditions.
\newblock {\em BMC Syst Biol}, 6(1):39, 2012.

\bibitem{van2012testing}
K~van Eunen, J.~Kiewiet, H.V. Westerhoff, and B.M. Bakker.
\newblock Testing biochemistry revisited: how in vivo metabolism can be
  understood from in vitro enzyme kinetics.
\newblock {\em PLoS Comp. Biol.}, 8(4):e1002483, 2012.

\bibitem{varga1976recurring}
R.S. Varga.
\newblock On recurring theorems on diagonal dominance.
\newblock {\em Linear Algebra and its Applications}, 13(1):1--9, 1976.

\bibitem{willems1976lyapunov}
J.C. Willems.
\newblock Lyapunov functions for diagonally dominant systems.
\newblock {\em Automatica}, 12(5):519--523, 1976.

\end{thebibliography}
\end{document}